\theoremstyle{plain}
\newtheorem{theorem}{Theorem}[section]
\theoremstyle{definition}
\newtheorem{definition}{Definition}[section]
\theoremstyle{definition}
\newtheorem{remark}{Remark}[section]
\theoremstyle{definition}
\newtheorem{example}{Example}[section]
\theoremstyle{plain}
\newtheorem{prop}{Proposition}[section]
\theoremstyle{plain}
\newtheorem{lemm}{Lemma}[section]
\theoremstyle{plain}
\newtheorem{cor}{Corollary}[section]
\definecolor{Noir}{rgb}{0,0,0} 
\definecolor{Blanc}{rgb}{1,1,1} 
\definecolor{Gray}{rgb}{0.5,0.5,0.5} 
\definecolor{Rouge}{rgb}{0.8,0.1,0.1} 
\definecolor{DBleu}{RGB}{51,51,178} 
\definecolor{LBleu}{rgb}{0.85,0.85,1} 
\definecolor{Orange}{RGB}{255,140,0} 
\newcommand{\bcent}{\begin{center}} 
\newcommand{\ecent}{\end{center}} 
\newcommand{\benum}{\begin{enumerate}} 
\newcommand{\eenum}{\end{enumerate}} 
\newcommand{\bitem}{\begin{itemize}} 
\newcommand{\eitem}{\end{itemize}} 
\newcommand{\btab}{\begin{tabular}} 
\newcommand{\etab}{\end{tabular}} 
\newcommand{\beqn}{\begin{eqnarray}} 
\newcommand{\eeqn}{\end{eqnarray}} 
\newcommand{\bmath}{\begin{math}} 
\newcommand{\emath}{\end{math}} 
\newcommand{\noin}{\noindent} 
\providecommand{\F}[1]{\mathbb{#1}}
\newcommand{\ZZ}{\mathbb{Z}}
\newcommand{\NN}{\F{N}}
\newcommand{\CC}{\F{C}}
\newcommand{\PP}{\F P}
\newcommand{\plus}{\oplus}
\newcommand{\w}{\omega} 
\providecommand{\Cal}[1]{\mathcal{#1}} 
\newcommand{\CO}{\Cal O}
\newcommand{\calM}{\mathcal{M}}
\newcommand{\calO}{\mathcal{O}}
\newcommand{\calB}{\mathcal{B}}
\newcommand{\CM}{\Cal M}
\renewcommand{\epsilon}{\varepsilon}
\providecommand{\set}[1]{\left\{#1\right\}}
\newcommand\varleq{\mathbin{\vcenter{\hbox{% 
  \oalign{\hfil$\scriptstyle<$\hfil\cr 
          \noalign{\kern-.3ex} 
          $\scriptscriptstyle({-})$\cr}% 
}}}} 
\renewcommand\subsetneq{\mathbin{\vcenter{\hbox{% 
  \oalign{\hfil$\scriptstyle\subset$\hfil\cr 
          \noalign{\kern-.3ex} 
          $\scriptscriptstyle({-})$\cr}% 
}}}} 
\author{Steven Rayan}
\address{Department of Mathematics \& Statistics\\
  McLean Hall, University of Saskatchewan\\
  Saskatoon, SK, Canada  ~S7N 5E6}
\email{rayan@math.usask.ca, ejs383@mail.usask.ca}
\author{Evan Sundbo}
\title[Twisted Cyclic Quiver Varieties on Curves]{Twisted Cyclic Quiver Varieties on Curves}
\date{\today}
\subjclass[2010]{14D20, 14H60, 16G20}
\begin{document}

\maketitle

\begin{abstract}
We study the algebraic geometry of twisted Higgs bundles of cyclic type along complex curves.  These objects, which generalize ordinary cyclic Higgs bundles, can be identified with representations of a cyclic quiver in a twisted category of coherent sheaves.  Referring to the Hitchin fibration, we produce a fibre-wise geometric description of the locus of such representations within the ambient twisted Higgs moduli space.  When the genus is $0$, we produce a concrete geometric identification of the moduli space as a vector bundle over an associated (twisted) $A$-type quiver variety; we count the number of points at which the cyclic moduli space intersects a Hitchin fibre; and we describe explicitly certain $\CC^\times$-flows into the nilpotent cone.  We also extend this description to moduli of certain twisted cyclic quivers whose rank vector has components larger than $1$.  We show that, for certain choices of underlying bundle, such moduli spaces decompose as a product of cyclic quiver varieties in which each node is a line bundle.
\end{abstract}

\tableofcontents

%%%%%%%%%%%%%%%%%%%%%%%%%%%%%%%%%%%%%%%%%%%%%%%%%%%%%%%%%%%%%%%%%%%%%%%%%%%%%%
\section{Introduction}
%%%%%%%%%%%%%%%%%%%%%%%%%%%%%%%%%%%%%%%%%%%%%%%%%%%%%%%%%%%%%%%%%%%%%%%%%%%%%%

Given a complex projective curve $X$ of genus $g\geq2$, a \emph{cyclic Higgs bundle} on $X$ is an $n$-tuple of holomorphic line bundles $L_i\to X$ with respective holomorphic maps $\phi_i\in H^0(X,L_i^{-1}\otimes L_{i+1}\otimes\omega_X)$, $\phi_n\in H^0(X,L_n^{-1}\otimes L_{1}\otimes\omega_X)$, where $\omega_X$ is the canonical line bundle of $X$.  The data of the $\phi_i$ maps can be arranged to form a matrix$$\Phi=\left(\begin{array}{ccccc} 0&\cdots & & & \phi_n\\\phi_1 &\ddots & & &\\ & \phi_2 & & &\\& & \ddots & &\\ & & & \phi_{r-1} &0\end{array}\right),$$which we regard as acting on sections of $E=\bigoplus L_i$ by multiplication.  This is a special case of data of the form $(E,\Phi)$, where $E\to X$ is a holomorphic vector bundle and $\Phi\in H^0(X,\mbox{End}(E)\otimes\omega_X)$. This data $(E,\Phi)$ is what is usually termed a \emph{Higgs bundle} on $X$.

Cyclic Higgs bundles generalize the elements of the Hitchin section in the moduli space of Higgs bundles, are closely related to the affine Toda equations \cite{DB:15}, and have been crucial to establishing some of the known cases \cite{Lab17} of Labourie's conjecture \cite{Lab06,Lof07,ColThoTou17}.  They also occupy a special position within nonabelian Hodge theory because they admit diagonal harmonic metrics solving Hitchin's equations.  At the same time, they capture essential information about the geometry of $\CC^\times$-flows on the moduli space of Higgs bundles.  Hitchin showed \cite{NJH:86} that the energy of $t\cdot(E,\Phi)=(E,t\Phi)$ grows with integral monotonicity as $t$ increases.  For cyclic Higgs bundles, this was improved to pointwise monotonicity in \cite{DaiLi17,DL:19}.  These observations are intertwined with the ``Hitchin WKB problem'' \cite{GaiMooNei13,MazSwoWeiWit14,Moc15,KNPS:15,MazSwoWeiWit16,DumFreKydMazMulNei16,Fre18,DumNei19} in the nonabelian Hodge theory of $X$, asking about the behaviour of harmonic metrics, harmonic maps, and their corresponding flat connections as $t$ goes to infinity in $(E,t\Phi)$.  Again, this problem has been resolved for cyclic Higgs bundles in the Hitchin section \cite{ColLi17}.

In this paper, we generalize the notion of cyclic Higgs bundles to incorporate higher-rank bundles $L_i$ and arbitrary twists.  In other words, we consider the data of an $n$-tuple of holomorphic vector bundles $U_i$ and maps $\phi_i\in H^0(X,U_i\otimes U_{i+1}\otimes L)$, $\phi_n\in H^0(X,U_n^{-1}\otimes U_1\otimes L)$, where $L$ is a fixed line bundle on $X$.  We refer to these as \emph{twisted cyclic Higgs bundles}.  The advantage here is that the degree of $L$ may be taken large enough so that such objects admit moduli even when $g=0,1$ (alternatively one may keep $\w_X$ but puncture $X$ sufficiently-many times and introduce a parabolic or irregular structure in $\Phi$ at these points, as introduced in \cite{BodYok96}).

We study the moduli problem for twisted cyclic Higgs bundles with the stability condition induced by viewing them as special instances of twisted Higgs bundles in general, i.e. Higgs bundles with $\Phi\in H^0(X,\mbox{End}(E)\otimes L)$.  In particular, we identify the moduli space with a space of quiver representations in an appropriately-defined $L$-twisted category of sheaves on $X$.  The underlying quiver is a cyclic one:
\begin{equation}\nonumber
\begin{tikzpicture}
\node (a) at (0,0){$\bullet$};
\node (b) at (1.5,0){$\bullet$};
\node(c) at (3.2,0){$\,\,\cdots\,\,$};
\node(d) at (4.9,0){$\bullet$};

\draw[->] (a) to (b);
\draw[->] (b) to (c);
\draw[->] (c) to (d);
\draw[->] (d) to[bend right,looseness=1] (a);
\end{tikzpicture}
\end{equation}

Such quivers are naturally related to $A$-type quivers.  From the vantage point of the $\CC^\times$-action on the moduli space of Higgs bundles, fixed points are twisted representations of $A$-type quivers and have been studied extensively, see for example \cite{PBG:95,AGS:06,SR:11,GHS:14,SM:16}.  When the rank and degree are coprime, the fixed points are also precisely where the moduli space of twisted cyclic Higgs bundles intersects the nilpotent cone of the ambient moduli space of general twisted Higgs bundles.  The stability condition here is the usual Mumford-Hitchin slope condition.

We refer to the vector defined by the ordered ranks of the $U_i$'s as the \emph{type} of a twisted cyclic Higgs bundle.  We study moduli spaces of representations of type $(1,\dots,1)$ cyclic quivers in Section \ref{1...1g}.  Our main result here is Theorem \ref{cyclic1...1} which gives a fibre-wise description of the moduli space for the groups $\mbox{SL}(n,\CC)$ and $\mbox{PGL}(n,\CC)$.  In Section \ref{1...1p1} we specialize to $g=0$, where we can give concrete descriptions of the moduli spaces of representations of $(1,\dots,1)$ cyclic quivers (Theorem \ref{thmcyclicp1}).  We describe two different ways to view this moduli space, one of which counts the number of points of intersection of our moduli space with the Hitchin system per fibre, and the other of which describes the cyclic quiver variety as a vector bundle over its corresponding $A$-type quiver variety.  These moduli spaces are also a special kind of (singular) weighted projective spaces.  We conclude in Section \ref{k1p1} with a further generalization to cyclic quivers on $\PP^1$  with other types of rank vectors.  We show in Theorem \ref{k1cyclicp1} that for certain choices of bundles, the moduli space decomposes as a product of type $(1,1)$ cyclic quiver varieties.\\

\noin\emph{Acknowledgements.}   We thank Peter Gothen and Qiongling Li for useful discussions.  The first named author was supported during this work by an NSERC Discovery Grant.  The second named author was supported by an NSERC Alexander Graham Bell Scholarship.  We thank the GEAR Network (NSF grants DMS 1107452, 1107263, 1107367 \emph{RNMS: Geometric Structures and Representation Varieties}) for supporting the workshop ``Geometry and Physics of Gauge Theories at Infinity'' (August 2018) co-organized by the first author, where formative steps in this work were taken. Parts of this work were completed during the Mathematisches Forschungsintitut Oberwolfach Workshop ``Geometry and Physics of Higgs Bundles'' and during the ``Geometry and Physics of Hitchin Systems'' Thematic Program at the Simons Center for Geometry and Physics (both in May 2019).  The first named author thanks the organizers of the former and both authors thank the organizers of the latter.\\

%%%%%%%%%%%%%%%%%%%%%%%%%%%%%%%%%%%%%%%%%%%%%%%%%%%%%%%%%%%%%%%%%%%%%%%%%%%%%%
\section{Preliminaries}\label{prelim}
%%%%%%%%%%%%%%%%%%%%%%%%%%%%%%%%%%%%%%%%%%%%%%%%%%%%%%%%%%%%%%%%%%%%%%%%%%%%%%

Fix a nonsingular, irreducible, projective complex curve or a smooth, compact, connected Riemann surface $X$ and a holomorphic line bundle $L\to X$.  We will use ``curve'' and ``Riemann surface'' interchangeably and will always retain the preceding hypotheses.  We will use $\mathcal O_X$ and $\omega_X$ to denote the structure and canonical line bundles, respectively.  We also denote by $\mbox{Jac}^0(X)[r]$ the (finite) group of $r$-torsion line bundles on $X$, equivalently the order-$r$ roots of unity in the divisor group of $X$.  When $r$ is understood, we use the shorthand $\Gamma$ to refer to this group.  We denote by $\calO_{\PP^n}(d)$ the unique isomorphism class of holomorphic line bundes of degree $d$ on the projective space $\PP^n$ and when $n=1$ we simply write $\calO(d)$.

\begin{definition}\label{defhiggs}
An \emph{$L$-twisted Higgs bundle on $X$} is a pair $(E,\Phi)$, where $E$ is a holomorphic vector bundle on $X$ and $\Phi$ is a vector bundle morphism $\Phi:E\to E\otimes L$.  The \emph{slope} of $(E,\Phi)$ is $\mu(E) = \frac{\text{deg}E}{\text{rk}E}$. An $L$-twisted Higgs bundle $(E,\Phi)$ is called \emph{stable} (resp. \emph{semistable}) if 
\begin{equation}\nonumber
\mu(F) < (\leq) \mbox{ }\mu(E)
\end{equation}
for all proper subbundles $F$ of $E$ which satisfy $\Phi(F)\subseteq F\otimes L$ (this condition is known as \emph{$\Phi$-invariance}).  We will sometimes denote $\mu(E)$ by $\mu_{tot}$.  The data of $\Phi$ is commonly referred to as a \emph{Higgs field} for $E$.
\end{definition}

\begin{definition} We say that two (twisted) Higgs bundles $(E,\Phi)$ and $(E',\Phi')$ on $X$ are \emph{equivalent} if there exists a vector bundle isomrophism $\Psi:E\stackrel{\cong}{\to}E'$ for which $\Phi = \Psi\Phi'\Psi^{-1}$. 
\end{definition}

We denote the moduli space of equivalence classes of (semi)stable $L$-twisted Higgs bundles on $X$ of rank $r$ and degree $d$  by $\mathcal{M}_{X,L}(r,d)$.   When $L=\omega_X$ is the canonical line bundle of the curve (i.e. the case of ordinary Higgs bundles), the moduli space admits two fibrations: a surjective, proper fibration onto an affine base and a vector bundle fibration that is only rationally defined, given by the cotangent bundle to the moduli space of (semi)stable vector bundles without Higgs fields.  For general $L$ the latter fibration is unavailable, while the former, known as the \emph{Hitchin fibration}, is always defined.  For $\mathcal{M}_{X,L}(r,d)$, this is given by the map
\begin{equation}\begin{split}\nonumber
h:\mbox{ }\mathcal{M}&_{X,L}(r,d) \to \mathcal{B}_r = \bigoplus_{i=1}^rH^0(X,L^{\otimes i})
\\
&(E,\Phi)\mapsto \text{char}_\lambda\Phi
\end{split}\end{equation}
 
 The projection $h$ is the \emph{Hitchin map} and the base $\calB_r=\bigoplus_{i=1}^rH^0(X,L^{\otimes i})$ is the \emph{Hitchin base}.  This fibration is well-known in the case that $g \geq 2$ and $L=\omega_X$ where it endows $\mathcal{M}_{X,L}(r,d)$ with the structure of a hyperkaehler completely integrable system, but exists for any $g$ and $L$.
 
 Technically, Definition \ref{defhiggs} defines $L$-twisted $\mbox{GL}(r,\CC)$-Higgs bundles.  For the groups $\mbox{SL}(r,\CC)$ and $\mbox{PGL}(r,\CC)$, we have respectively:
 
 \begin{definition}
 An \emph{$L$-twisted $\mbox{SL}(r,\CC)$}-Higgs bundle with determinant $P$ on $X$ is a pair $(E,\Phi)$ consisting of a rank $r$ holomorphic vector bundle $E\to X$ with $\mbox{det}(E)=\bigwedge^r(E)=P$ and a holomorphic map $\Phi:E\to E\otimes L$ with $\mbox{tr}(\Phi)=0$.  We denote the corresponding moduli space by $\CM_{X,L}(r,P)$.
 \end{definition}

 \begin{definition}
An \emph{$L$-twisted $\mbox{PGL}(r,\CC)$-Higgs bundle} on $X$ is an equivalence class $[(E,\Phi)]$ of $L$-twisted $\mbox{SL}(r,\CC)$-Higgs bundles where $(E,\Phi)$ and $(E',\Phi')$ are equivalent if $E\cong E'\otimes M$ for some line bundle $M$ over $X$ and $\Phi=\Phi'\otimes 1_M$.  We denote the corresponding moduli space by $\CM^{\Gamma}_{X,L}(r,P)$.
 \end{definition}

In the second definition, note that the condition $E\cong E'\otimes M$ forces $M^r=\CO_X$, and hence $M\in\Gamma$.  It follows that an alternative description of $\CM^\Gamma_{X,L}(r,P)$ is as the quotient $\mathcal M_{X,L}(r,P)/\Gamma$, where $\Gamma$ acts by tensor product.

When the underlying curve is $X=\PP^1$, the notions of $\mbox{GL}(r,\CC)$-, $\mbox{SL}(r,\CC)$-, and $\mbox{PGL}(r,\CC)$-Higgs bundles coincide and so we will not make any distinction between them.\footnote{On $\PP^1$, fixing $\deg E=d$ forces $\det E\cong\mathcal O(d)$, and so up to isomorphism there is no freedom in the choice of $P$.  Also, $\Gamma=\set{\mathcal O_X}$ is trivial.  Hence, from the point of view of $E$, there is no difference between a $\mbox{GL}$, an $\mbox{SL}$, and a $\mbox{PGL}$ bundle.  Strictly speaking, $\Phi$ must be trace-free in the $\mbox{SL}$ and $\mbox{PGL}$ case, while the $\mbox{GL}$ case has no such restriction.  This will make no impact on the analysis for $g=0$, however, given that twisted cyclic Higgs bundles will always naturally have trace $0$.}

 \begin{definition}
An \emph{$L$-twisted cyclic Higgs bundle} on $X$ is an $L$-twisted Higgs bundle $(E,\Phi)$ on $X$ of the form
\begin{equation}\nonumber
E=U_1\oplus\dots\oplus U_n\mbox{,  }\quad
\Phi =  \left(\begin{matrix}
 0&\cdots&&\phi_n\\
\phi_1&\ddots&& \\
&\ddots&&\\
0&&\phi_{n-1}&0\\
\end{matrix}\right)
\end{equation}
where $U_i$ are holomorphic line bundles on $X$ and $\phi_i:U_i\to U_{i+1}\otimes L$.  Note that the subscript is counted modulo $n$.
\end{definition}

This special class of Higgs bundles will be our main focus.  Now let us shift gears to quiver representations.  A quiver is a finite directed graph, and we will be considering representations in the category $\mbox{Bun}(X,L)$ whose objects are holomorphic vector bundles on $X$ and whose morphisms are maps between them graded by exterior powers of $L$: that is, $\mbox{Hom}^k(U,V)=\mbox{Hom}(U,V\otimes\Lambda^k L)$. A representation of $Q$ in this category amounts to a choice of a vector bundle $U_i$ for each node (with rank and degree subject to some labelling of $Q$) and of a map $\phi_{ij}:U_j\to U_{i}\otimes L$ to each arrow.  Such representations are sometimes called \emph{quiver bundles}.  It is clear that these representations can be interpreted as $L$-twisted Higgs bundles, by taking the direct sum of the $U_i$'s and by forming the matrix $\Phi=(\phi_{ij})$.  Using the notions of equivalence and stability induced by this correspondence, we can study the moduli space of stable representations of $Q$ in $\mbox{Bun}(X,L)$, which we denote $\mathcal{M}_{X,L}(Q)$.

We can also define representations whose corresponding Higgs bundle is an $\mbox{SL}(r,\CC)$- or $\mbox{PGL}(r,\CC)$-Higgs bundle.  We denote these respective moduli spaces of stable objects by $\mathcal{M}_{X,L}(Q,P)$ and $\mathcal{M}^\Gamma_{X,L}(Q,P)$.

\begin{definition}
A \emph{cyclic quiver} is a quiver of the form
\begin{equation}\nonumber
\begin{tikzpicture}
\node (a) at (0,0){$\bullet$};
\node (b) at (1.5,0){$\bullet$};
\node(c) at (3.2,0){$\,\,\cdots\,\,$};
\node(d) at (4.9,0){$\bullet$};

\draw[->] (a) to (b);
\draw[->] (b) to (c);
\draw[->] (c) to (d);
\draw[->] (d) to[bend right,looseness=1] (a);
\end{tikzpicture}
\end{equation}
\end{definition}

It is clear that one can use cyclic quivers to study cyclic Higgs bundles (and vice versa).  Such quivers can be viewed as slight modifications of $A$-type quivers:
\begin{equation}\nonumber
\begin{tikzpicture}
\node (a) at (0,0){$\bullet$};
\node (b) at (1.5,0){$\bullet$};
\node(c) at (3.2,0){$\,\,\cdots\,\,$};
\node(d) at (4.9,0){$\bullet$};

\draw[->] (a) to (b);
\draw[->] (b) to (c);
\draw[->] (c) to (d);
\end{tikzpicture}
\end{equation} Representations of $A$-type quivers in $\text{Bun}(X,L)$ are often called \emph{holomorphic chains}, and moduli spaces of holomorphic chains are exactly the fixed points of the algebraic $\CC^\times$-action on $\calM_{X,L}(r,d)$.

%%%%%%%%%%%%%%%%%%%%%%%%%%%%%%%%%%%%%%%%%%%%%%%%%%%%%%%%%%%%%%%%%%%%%%%%%%%%%%%%%%%
\section{Examples: Hitchin section and analogues}
%%%%%%%%%%%%%%%%%%%%%%%%%%%%%%%%%%%%%%%%%%%%%%%%%%%%%%%%%%%%%%%%%%%%%%%%%%%%%%%%%%%

For the moment, let $X$ be a curve of genus $g\geq 2$ and consider specifically $\omega_X$-twisted $\mbox{SL}(2,\CC)$-Higgs bundles of the form
\begin{equation}\nonumber
E=\omega_X^{\frac{1}{2}}\oplus\omega_X^{-\frac{1}{2}}\mbox{,  }\quad
\Phi =  \left(\begin{matrix}
 0&q\\
1&0 \\
\end{matrix}\right)
\end{equation}
where $\omega_X^{\frac{1}{2}}$ is a choice of holomorphic square root of $\omega_X$ and $q:\omega_X^{-\frac{1}{2}} \to\omega_X^{\frac{1}{2}}\otimes\omega_X$.  That is to say that $q$ lies in $H^0(X,\omega_X^2)$, the space of holomorphic quadratic differentials on $X$.  In the $\mbox{SL}(2,\CC)$ moduli space, this is exactly the Hitchin base $\calB_2$.  All Higgs bundles of this form are stable, and so we have a map $\iota:\calB_2\to\calM_{X,\omega_X}(2,\mathcal O_X)$.  The image of this map is the \emph{Hitchin section}.  Alternatively, this a component of the moduli space of stable representations $\mathcal{M}_{X,\omega_X}(Q,\mathcal O_X)$ for the quiver
\begin{equation}\nonumber
\begin{tikzpicture}
\node (a) at (0,0){$\bullet_{1,g-1}$};
\node (b) at (2,0){$\bullet_{1,1-g}$};

\draw[->] (a) to [bend right](b);
\draw[->] (b) to[bend right] (a);
\end{tikzpicture}
\end{equation}

On $X=\PP^1$ there is an analogue of the Hitchin section in $\calM_{\PP^1,\calO(2)}(2,\calO)$, which we can motivate by studying representations of the cyclic quiver
\begin{equation}\nonumber
\begin{tikzpicture}
\node (a) at (0,0){$\bullet_{1,1}$};
\node (b) at (2,0){$\bullet_{1,-1}$};

\draw[->] (a) to [bend right](b);
\draw[->] (b) to[bend right] (a);
\end{tikzpicture}
\end{equation}
which amounts to looking at the family of $(E,\Phi)$ of the form
\begin{equation}\nonumber
E=\calO(1)\oplus\calO(-1)\mbox{,  }\quad
\Phi =  \left(\begin{matrix}
 0&q\\
1&0 \\
\end{matrix}\right)
\end{equation}
with $q\in H^0(\PP^1,\calO(2)^{2})= H^0(\PP^1,\calO(4))$, i.e. the space of holomorphic quadratic vector fields on the sphere.  We have again formed a section.  However, in higher ranks (or with different labellings or twisting line bundles) the moduli space of stable representations of a cyclic quiver is not, in general, a section. 

\begin{remark}
There exists a different generalization of the Hitchin section which is in fact a section of $\calM_{X,\omega_X}(r,\calO_X)$ (cf. \cite{BGG:12,C:17,ABCGGO:18, Dum18}, for instance).  These sections can be viewed as representations of (appropriately labelled) quivers
\begin{equation}\nonumber
\begin{tikzpicture}
\node (a) at (0,0){$\bullet$};
\node (b) at (1.5,0){$\bullet$};
\node (c) at (3,0){$\bullet$};
\node(d) at (4.7,0){$\,\,\cdots\,\,$};
\node(e) at (6.2,0){$\bullet$};

\draw[->] (a) to (b);
\draw[->] (b) to (c);
\draw[->] (c) to (d);
\draw[->] (d) to (e);
\draw[->] (e) to[bend right,looseness=1] (a);
\draw[->] (e) to[bend right,looseness=1] (b);
\draw[->] (e) to[bend right,looseness=1] (c);
\draw[->] (b) to[bend right,looseness=1] (a);
\draw[->] (c) to[bend right,looseness=1] (b);
\draw[->] (c) to[bend right,looseness=1] (a);
\end{tikzpicture}
\end{equation}
in which every possible ``backwards facing'' arrow is turned on.
\end{remark}

 %%%%%%%%%%%%%%%%%%%%%%%%%%%%%%%%%%%%%%%%%%%%%%%%%%%%%%%%%%%%%%%%%%
\section{Type $(1,\dots,1)$ cyclic quivers in arbitrary genus}\label{1...1g}
 %%%%%%%%%%%%%%%%%%%%%%%%%%%%%%%%%%%%%%%%%%%%%%%%%%%%%%%%%%%%%%%%%%
 
We begin by considering cyclic quivers whose labellings have $r_i=1$ for all $i$.  We will also from now on restrict ourselves to the case with $r$ and $d$ coprime to discount the possibility of representations which are semistable but not stable.  The quivers which we consider look like 
\begin{equation}\nonumber
\begin{tikzpicture}
\node (a) at (-0.3,0){$\bullet_{1,d_1}$};
\node (b) at (1.5,0){$\bullet_{1,d_2}$};
\node(c) at (3.2,0){$\,\,\cdots\,\,$};
\node(d) at (4.9,0){$\bullet_{1,d_n}$};

\draw[->] (a) to (b);
\draw[->] (b) to (c);
\draw[->] (c) to (d);
\draw[->] (d) to[bend right,looseness=1] (a);
\end{tikzpicture}
\end{equation}
with representations
\begin{equation}\nonumber\label{cyclicrep}
\begin{tikzpicture}
\node (a) at (0,0){$U_1$};
\node (b) at (1.5,0){$U_2$};
\node(c) at (3.2,0){$\,\,\cdots\,\,$};
\node(d) at (4.9,0){$U_n$};

\node (e) at (2.5,1.1) {$\phi_n$};

\draw[->] (a) -- (b) node[pos=0.5,below]{$\phi_1$};
\draw[->] (b) -- (c) node[pos=0.5,below]{$\phi_2$};
\draw[->] (c) -- (d) node[pos=0.5,below]{$\phi_{n-1}$};
\draw[->] (d) to [bend right,looseness=1] (a);
\end{tikzpicture}
\end{equation}
where $U_i$ is a line bundle of degree $d_i$ and $\phi_i:U_i\to U_{i+1}\otimes L$. 

Let us first consider how the corresponding automorphism group acts on a representation.  By our earlier definition of equivalence and the structure of a cyclic quiver, we have, for $\psi_i\in\mbox{Aut}(U_i)\cong \CC^\times$,
\begin{equation}\begin{split}\nonumber
\Psi\Phi\Psi^{-1} &= \left(\begin{matrix}
 0&\cdots&&\psi_1\psi_n^{-1}\phi_n\\
\psi_2\psi_1^{-1}\phi_1&\ddots&& \\
&\ddots&&\\
0&&\psi_n\psi_{n-1}^{-1}\phi_{n-1}&0\\
\end{matrix}\right).
\end{split}\end{equation}
By writing
 \begin{equation}\begin{split}\nonumber
\lambda_1&=\psi_2\psi_1^{-1}
\\
&\vdots
\\
\lambda_{n-1}&=\psi_n\psi_{n-1}^{-1}
\end{split}\end{equation}
we can realize the action of $\Psi\in\bigoplus_{i=1}^n\mbox{Aut}(U_i)$ as the action of $\left(\CC^\times\right)^{n-1}$ on the $\Phi$ part of $\mbox{Rep}(Q)$, given by
\begin{equation}\begin{split}\label{cyclicaction}
(\lambda_1,\dots,\lambda_{n-1})\cdot \Phi
=
\left(\begin{matrix}
 0&\cdots&&(\lambda_1^{-1}\dots\lambda_{n-1}^{-1})\phi_n\\
\lambda_1\phi_1&\ddots&& \\
&\ddots&&\\
0&&\lambda_{n-1}\phi_{n-1}&0\\
\end{matrix}\right)
\end{split}\end{equation}

Before moving on, we should say something about stability of cyclic quiver representations.
\begin{prop}\label{cyclicstab}
If $(U_1,\dots,U_n;\phi_1,\dots,\phi_n)$ is a representation of a type $(1,\dots,1)$ cyclic quiver in $\mbox{Bun}(X,L)$, then exactly one of the maps $\phi_i$ is allowed the possibility of being identically zero.
\end{prop}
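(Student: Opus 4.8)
The plan is to argue by contraposition: I will show that if \emph{two} (or more) of the maps $\phi_i$ vanish identically, then the representation cannot be stable, so that a stable type $(1,\dots,1)$ cyclic quiver representation can have at most one zero map. Throughout I work with the ambient Higgs bundle $E=\bigoplus_{i=1}^n U_i$ and $\Phi=(\phi_{ij})$, recalling that $\Phi$ acts on the $U_j$-summand by $\phi_j\colon U_j\to U_{j+1}\otimes L$, with indices read modulo $n$. Since I only need to \emph{exhibit} a destabilizing subbundle rather than classify all of them, I would not attempt to characterize every $\Phi$-invariant subbundle of $E$; it suffices to consider the coordinate sub-sums $F_S=\bigoplus_{j\in S}U_j$ for $S\subseteq\{1,\dots,n\}$, each of which is automatically a subbundle of $E$ as a direct summand. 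Because $\Phi(U_j)\subseteq U_{j+1}\otimes L$, the criterion I would record first is the elementary observation that $F_S$ is $\Phi$-invariant precisely when, for every $j\in S$ with $\phi_j\not\equiv 0$, one has $j+1\in S$.

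Next, suppose $\phi_a\equiv 0$ and $\phi_b\equiv 0$ for two distinct indices $a\neq b$. These two indices cut the cyclic order into complementary arcs $S=\{b+1,\dots,a\}$ and $S'=\{a+1,\dots,b\}$ (read modulo $n$), each of which is nonempty since $a\neq b$ and $n\geq 2$, and neither of which is all of $\{1,\dots,n\}$. Within $S$ the only index whose successor leaves $S$ is $a$ itself, and $\phi_a\equiv 0$; symmetrically, within $S'$ the only such index is $b$, and $\phi_b\equiv 0$. By the criterion above, both $F_S$ and $F_{S'}$ are then proper nonzero $\Phi$-invariant subbundles, and by construction $E=F_S\oplus F_{S'}$.

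Finally I would compare slopes. Since $\deg E=\deg F_S+\deg F_{S'}$ and $\rk E=\rk F_S+\rk F_{S'}$, the total slope $\mu(E)$ is a convex combination of $\mu(F_S)$ and $\mu(F_{S'})$; in particular at least one of these two satisfies $\mu(F)\geq\mu(E)$. This contradicts the requirement $\mu(F)<\mu(E)$ imposed by stability on every proper $\Phi$-invariant subbundle, so the representation is not stable. The computations here are entirely elementary, and I expect the only point needing genuine care to be the invariance verification in the middle step: the whole argument turns on the fact that the two vanishing maps sit exactly at the endpoints of the two arcs, and that both arcs are proper and nonempty, which is what breaks the cycle into two invariant pieces. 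A single vanishing map, by contrast, does not produce a complementary pair of invariant arcs but merely presents the object as a holomorphic chain, so one zero is genuinely consistent with stability; this is the content of ``exactly one'' in the statement.
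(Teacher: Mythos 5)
Your argument for the ``at most one'' half is correct, and it is essentially the paper's own: two vanishing maps cut the cycle into two complementary nonzero proper $\Phi$-invariant arcs, and since $\mu(E)$ is a weighted average of the slopes of the two pieces, both cannot have slope strictly below $\mu(E)$, contradicting stability. Your invariance criterion for coordinate subbundles is also the right bookkeeping for this step.

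However, the proposition asserts \emph{exactly} one, and the second half --- that at least one index is genuinely allowed to vanish --- is where your proof has a real gap. Your closing sentence claims that a single vanishing map ``merely presents the object as a holomorphic chain, so one zero is genuinely consistent with stability,'' but this does not follow. When $\phi_i\equiv 0$, every arc $U_j\oplus U_{j+1}\oplus\cdots\oplus U_i$ ending at $i$ becomes $\Phi$-invariant, and whether one of these destabilizes is a condition on the degrees $d_j$, not something automatic. Already for $n=2$: if $d_1>d_2$, then $\phi_1\equiv 0$ makes $U_1$ an invariant subbundle of slope $d_1>\mu_{tot}$, so the vanishing of $\phi_1$ is \emph{not} consistent with stability; only $\phi_2$ may vanish. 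Thus for a given index ``one zero'' may be impossible, and what requires proof is that this failure cannot occur at all $n$ indices simultaneously. That existence claim is where the bulk of the paper's proof lives: assuming each index $i$ has a destabilizing arc that is invariant precisely when $\phi_i=0$, one takes the minimal such arc $V_i$ for each $i$, shows that distinct $V_i$'s must be nested or disjoint, and then extracts a subcollection of them which covers $E$ with trivial pairwise intersections; since each member has $\mu(V_i)>\mu_{tot}$ while $\mu_{tot}$ is a weighted average of the slopes of the pieces, this is a contradiction. Without an argument of this kind, you have proved only ``at most one,'' and the convention the paper builds on this proposition --- re-indexing so that $\phi_n$ is \emph{the} map allowed to be zero --- would not be justified.
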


\begin{proof}
Thoughout the proof, let the indices be counted modulo $n$.

If $\phi_i$ and $\phi_j$ with $i>j$ were both allowed to be zero, then $\bigoplus_{i=1}^n U_i$ could be presented as a direct sum of two $\Phi$-invariant subbundles, both of which have slope less than $\mu_{tot}$. This is a contradiction, meaning that at most one map can be the zero map.

To show that such a map always exists, suppose that stability imposes $\phi_i\neq 0$ for all $i=1,\dots,n$.  That is, for each $\phi_i$ there is at least one subbundle of $\bigoplus_{i=1}^n U_i$ which has slope greater than $\mu_{tot}$ and which is $\Phi$-invariant if and only if $\phi_i=0$.  For $\phi_i$, such an associated destabilizing subbundle has the form $U_j\oplus U_{j+1}\oplus\dots\oplus U_i$ for some $j$.   Now for each $i=1,\dots,n$, define $V_i$ to be the subbundle of $\bigoplus_{i=1}^nU_i$ which has these properties and has the lowest rank:
\begin{equation}\nonumber
V_i=U_{v(i)}\oplus U_{v(i)+1}\oplus\dots\oplus U_i,
\end{equation}
where $v:\{1,\dots,n\}\to\{1,\dots,n\}$.  If each $V_i$ is a line bundle, then $V_i=U_i$, and since $\mu(V_i)> \mu_{tot}$ for all $i=1,\dots,n$ we have
\begin{equation}\nonumber
\sum_{i=1}^n \mbox{deg}(U_i) = \sum_{i=1}^n \mu(V_i) > n\mu_{tot} = \sum_{i=1}^n \mbox{deg}(U_i),
\end{equation}
which is a contradiction. Therefore, we assume that at least one $V_i$ has rank greater than $1$.  For any such $V_i$ we have by definition that $\mu(U_k\oplus U_{k+1}\oplus\dots U_i) <\mu_{tot}$ for all $k$ such that $v(i)<k\leq i$.  This also tells us that $\mu(U_{v(i)}\oplus U_{v(i)+1}\oplus \dots\oplus U_{k-1}) >\mu_{tot}$.  The existence of these subbundles with slope greater than $\mu_{tot}$ gives us information about $V_k$, namely that $V_k\subset V_i$ for $v(i)<k<i$, and so in fact if $V_j \cap V_i \neq \varnothing$ for any $i\neq j$, one must be contained in the other.

Since all the $V_i$ are proper subbundles of $\bigoplus_{i=1}^nU_i$, there must exist a subset $I\subset\{1,\dots,n\}$ with $|I|>1$  such that 
\begin{equation}\nonumber
\bigcap_I V_i = \varnothing \,\,\mbox{\,\,and\,\,}\,\, \bigcup_I V_i=\bigoplus_{i=1}^n U_i.
\end{equation}
Recalling that $\mu(V_i)>\mu_{tot}$ for all $i=1,\dots,n$, we have a contradiction.  Thus, there is exactly one map $\phi_i$ which is allowed to be the zero map.
\end{proof}

With this result in our pocket, from now on we will consider all representations to have been re-indexed so that $\phi_n$ is the map which is allowed to be zero. This also provides a restriction regarding which labelled cyclic quivers we should be considering:  $Q$ admits stable representations if and only if $t\geq d_i-d_{i+1}$ for all $i=1,\dots,n-1$.  Moreover, a $(1,\dots,1)$ cyclic quiver that admits stable representations has a unique underlying $A$-type quiver that admits stable representations, which we will denote by $Q^A$. Before we can get a handle on the cyclic quiver variety, we need to understand the associated $A$-type quiver variety. The following result appears (for $\omega_X$-twisted Higgs bundles) implicitly in \cite{NJH:86} for rank $2$ and in \cite{PBG:94} for rank $3$.

\begin{lemm}\label{1...1}
Let $X$ be a curve of genus $g$, $L$ a holomorphic line bundle of degree $t$ on $X$, and $Q$ be a type $(1,\dots,1)$ cyclic quiver. Then the $\emph{SL}(n,\CC)$ and $\emph{PGL}(n,\CC)$ moduli spaces of representations of its associated $A$-type quiver $Q^A$ in $\text{Bun}(X,L)$ are
\begin{equation}\nonumber
\mathcal{M}_{X,L}(Q^A,P) \cong \tensor[^\sim]{\left(\prod_{i=1}^{n-1}\mbox{Sym}^{d_{i+1}-d_i+t}(X)\right)}{}
\end{equation}
and 
\begin{equation}\nonumber
\mathcal{M}_{X,L}^\Gamma(Q^A,P) \cong \prod_{i=1}^{n-1}\mbox{Sym}^{d_{i+1}-d_i+t}(X)
\end{equation}
respectively, where the superscript $\sim$ is used to denote an $n^{2g}$-sheeted covering.
\end{lemm}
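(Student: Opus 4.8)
The plan is to coordinatize a stable chain by the vanishing divisors of its maps and to show that the only residual ambiguity is the choice of an $n$-th root of a single, determined line bundle. First I would record, by the splitting argument of Proposition \ref{cyclicstab}, that stability of an $A$-type chain forces every map $\phi_i$ ($1\le i\le n-1$) to be nonzero: if some $\phi_i=0$ then $U_1\oplus\dots\oplus U_i$ and $U_{i+1}\oplus\dots\oplus U_n$ are complementary $\Phi$-invariant subbundles, and one of the two must have slope $\ge\mu_{tot}$, contradicting stability (here $r,d$ coprime makes semistable equal stable). When all maps are nonzero, a saturated subrepresentation must assign $U_i$ or $0$ to each node with $\phi_i(U_i)\subseteq F_{i+1}\otimes L$, so the only saturated $\Phi$-invariant subbundles are the tails $U_j\oplus\dots\oplus U_n$. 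Hence stability reduces to the purely numerical conditions $\mu(U_j\oplus\dots\oplus U_n)<\mu_{tot}$ for $2\le j\le n$, which is exactly the hypothesis that $Q^A$ admits stable representations; in particular stability depends only on the fixed degrees $d_i$.

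Each $\phi_i$ is then a nonzero section of the line bundle $U_i^{-1}\otimes U_{i+1}\otimes L$, of degree $d_{i+1}-d_i+t$, and the automorphism group $(\CC^\times)^{n-1}$ rescales each $\phi_i$ by an independent nonzero scalar (analogously to \eqref{cyclicaction}). Thus the vanishing divisor $D_i=\mathrm{div}(\phi_i)$ is well defined on the moduli space, giving a morphism $\mathcal{M}_{X,L}(Q^A,P)\to\prod_{i=1}^{n-1}\mbox{Sym}^{d_{i+1}-d_i+t}(X)$. Conversely I would reconstruct a chain from a tuple $(D_1,\dots,D_{n-1})$: the isomorphism $U_i^{-1}\otimes U_{i+1}\otimes L\cong\mathcal{O}(D_i)$ forces $U_{i+1}\cong U_i\otimes L^{-1}\otimes\mathcal{O}(D_i)$, so a choice of $U_1$ determines $U_2,\dots,U_n$ and, since a section with prescribed divisor on a projective curve is unique up to scale, it also determines the $\phi_i$ up to the scalars absorbed by the automorphisms. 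Hence the chain is recovered from $(D_1,\dots,D_{n-1})$ together with the single line bundle $U_1$.

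The crux is the determinant constraint. Unwinding the recursion gives $U_i\cong U_1\otimes L^{-(i-1)}\otimes\mathcal{O}(D_1+\dots+D_{i-1})$, whence
\begin{equation}\nonumber
\det E=\bigotimes_{i=1}^n U_i\cong U_1^{\otimes n}\otimes L^{-\binom{n}{2}}\otimes\mathcal{O}\Big(\sum_{k=1}^{n-1}(n-k)D_k\Big).
\end{equation}
Imposing $\det E\cong P$ pins $U_1^{\otimes n}$ down to a fixed line bundle depending only on $P$, $L$, and the $D_k$, so $U_1$ is determined up to an element of $\mbox{Jac}^0(X)[n]=\Gamma\cong(\ZZ/n\ZZ)^{2g}$ of order $n^{2g}$. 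This realizes the divisor morphism as an $n^{2g}$-sheeted covering, which is the asserted description of $\mathcal{M}_{X,L}(Q^A,P)$. For the $\mbox{PGL}$ case I would note that tensoring by $M\in\Gamma$ fixes each $U_i^{-1}\otimes U_{i+1}\otimes L$, hence all the $D_i$, while sending $U_1\mapsto U_1\otimes M$; this is precisely the simply transitive $\Gamma$-action on the fibres of the covering, and it is free since $U_i\otimes M\cong U_i$ forces $M\cong\mathcal{O}_X$. Therefore $\mathcal{M}^\Gamma_{X,L}(Q^A,P)=\mathcal{M}_{X,L}(Q^A,P)/\Gamma$ is the base $\prod_{i=1}^{n-1}\mbox{Sym}^{d_{i+1}-d_i+t}(X)$.

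The main obstacle I anticipate is not any single computation but upgrading the bijection to an isomorphism of varieties: I would need the divisor map and its inverse to be algebraic, which I would establish by carrying out the reconstruction in families over the symmetric products using a relative Poincaré line bundle, and I would confirm that the $\Gamma$-cover is étale, which follows from the freeness just verified. A secondary point requiring care is the stability bookkeeping ensuring that every reconstructed chain is genuinely stable, i.e. that the numerical tail conditions are both necessary and sufficient once all maps are nonzero, so that the morphism is surjective onto the full product.
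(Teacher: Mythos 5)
Your proposal is correct and takes essentially the same route as the paper: identify each (necessarily nonzero) map $\phi_i$ with its vanishing divisor in $\mbox{Sym}^{d_{i+1}-d_i+t}(X)$, observe that the determinant constraint pins down the $n$-th power of one remaining line bundle (you solve for $U_1$, the paper for $U_{n-1}$), so that $\mathcal{M}_{X,L}(Q^A,P)$ is the $n^{2g}$-sheeted cover given by the choice of $n$-th root, and then note that $\Gamma=\mbox{Jac}^0(X)[n]$ permutes these roots simply transitively, giving the $\mbox{PGL}$ quotient as the product of symmetric products. Your extra care---deriving the nonvanishing of the $\phi_i$ from stability and flagging the algebraicity of the divisor map---goes slightly beyond the paper's set-theoretic argument but does not alter the approach.
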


\begin{proof}
First considering $\mbox{SL}(n,\CC)$-Higgs bundles, we are asking that a representation $(U_1,\dots,U_n;\phi_1,\dots,\phi_{n-1})$ of $Q^A$ satisifies $\text{det}\left(\bigoplus_{i=1}^n U_i\right) = P$ for some fixed $P\in\text{Jac}^d(X)$.  Since all the $U_i$ are line bundles, we have $\bigotimes_{i=1}^nU_i = P$ which tells us that one of the $U_i$ depends on the others; say 
\begin{equation}\nonumber
U_n = U_1^*U_2^*\ldots U_{n-1}^*P.
\end{equation}
Recall that $\phi_i\in H^0(X,U_i^*U_{i+1}L)\setminus \{0\}$, so $\text{deg}(\phi_i) = d_{i+1}-d_i+t$. Since we are modding out by the action of $\mathbb{C}^\times$ on this space, the information here amounts to a choice of $U_i^*U_{i+1}L$ and of projective class of $\phi_i$, which we will denote by $[\phi_i]$.  By the divisor correspondence, the information $(U_i^*U_{i+1}L,[\phi_i])$ is a point in the symmetric product of $X$ with itself $d_{i+1}-d_i+t$ times.  That is,
\begin{equation}\nonumber
\left(\left(U_1^*U_2L,[\phi_1]\right),\ldots ,\left(U_{n-1}^*U_nL,[\phi_{n-1}]\right)\right)\in\prod_{i=1}^{n-1}\mbox{Sym}^{d_{i+1}-d_i+t}(X).
\end{equation}
This is not the moduli space we are seeking,  since we want the $U_i$, not these $U_i^*U_{i+1}L$. We can, however, recover the $U_i$ by first writing $V_i = U_i^*U_{i+1}L$ and noting
\begin{equation}\begin{split}\nonumber
V_1V_2^2&V_3^3\ldots V_{n-2}^{n-2}V_{n-1}^*P\left(L^*\right)^{-1+\sum_{i=1}^{n-2}i}=U_{n-1}^n.
\end{split}\end{equation}
Thus, a point in $\prod_{i=1}^{n-1}\mbox{Sym}^{d_{i+1}-d_i+t}(X)$ fixes the $n$-th power of $U_{n-1}$.  Accounting for torsion in the Jacobian, we know that $U_{n-1}^n$ has $n^{2g}$ distinct roots \cite{A:71}.  Choosing one of these roots fixes all the other $U_i$, so $\mathcal{M}_{X,L}(Q^A,P)$ is an $n^{2g}$-fold covering of $\prod_{i=1}^{n-1}\mbox{Sym}^{d_{i+1}-d_i+t}(X)$ which we denote by $\tensor[^\sim]{\left(\prod_{i=1}^{n-1}\mbox{Sym}^{d_{i+1}-d_i+t}(X)\right)}{}$.

The $\mbox{PGL}(n,\CC)$ moduli space is realized by letting the $n$-th roots of unity $\text{Jac}^0(X)[n]$ act on $\tensor[^\sim]{\left(\prod_{i=1}^{n-1}\mbox{Sym}^{d_{i+1}-d_i+t}(X)\right)}{}$ in the following way:
\begin{equation}\nonumber
J\cdot \left(U_1,\ldots,U_n,[\phi_1],\ldots,[\phi_{n-1}]\right) = \left(J\otimes U_1,\ldots,J\otimes U_n,[\phi_1],\ldots,[\phi_{n-1}]\right).
\end{equation}
The orbits of this action consist of points $\left(U_1,\ldots,U_n,[\phi_1],\ldots,[\phi_{n-1}]\right)$ arising from different choices of the root of $U_{n-1}^n$, since if $R$ is an $n$-th root of $U_{n-1}^n$ then the others arise by tensoring $R$ with the $n$-th roots of unity in $\text{Jac}^0(X)$, and so we have our result.

\end{proof}

Now we can turn to the cyclic case and how $\calM_{X,L}(Q,P)$ and $\calM_{X,L}^\Gamma(Q^A,P)$ lie in the Higgs bundle moduli space.

\begin{lemm}\label{hitchincyclic}
The Hitchin map $h$ maps $\calM_{X,L}(Q,P)$ and $\calM_{X,L}^\Gamma(Q^A,P)$ surjectively onto $H^0(X,L^{\otimes n})\subset \calB_n$.
\end{lemm}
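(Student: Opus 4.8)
The plan is to reduce the statement to one characteristic-polynomial computation, followed by an elementary divisor-theoretic surjectivity argument, and then to transfer the result to the $\mbox{PGL}(n,\CC)$ case by a quotient. I would begin by computing $\mbox{char}_\lambda\Phi$ for the cyclic matrix $\Phi$. Its only nonzero entries lie on the subdiagonal together with the upper-right corner, so the ``support'' of $\Phi$ is a single oriented $n$-cycle; consequently every principal $k\times k$ minor with $0<k<n$ vanishes, since a nonzero term would require a permutation supported on the cycle other than the full $n$-cycle, which does not exist. Hence all intermediate coefficients vanish and $\mbox{char}_\lambda\Phi=\lambda^n-\phi_1\phi_2\cdots\phi_n$. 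Because $\phi_i:U_i\to U_{i+1}\otimes L$ with indices read cyclically, the product $\phi_1\cdots\phi_n$ is the composite around the cycle, a section of $\mbox{End}(U_1)\otimes L^{\otimes n}=L^{\otimes n}$. This already shows that $h$ carries the cyclic locus into the last summand $H^0(X,L^{\otimes n})\subset\calB_n$, sending a representation to the product of its arrows up to sign.

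For surjectivity I must realize every $s\in H^0(X,L^{\otimes n})$ as such a product. The point $s=0$ is attained by any representation with $\phi_n=0$: this is permitted by Proposition \ref{cyclicstab}, the object is then a stable representation of $Q^A$, and Lemma \ref{1...1} guarantees these exist. For $s\neq0$, set $D=\mbox{div}(s)$, an effective divisor of degree $nt$, and write $m_i=d_{i+1}-d_i+t$. One has $m_i\geq0$ for every $i$: for $i<n$ this is the admissibility condition $t\geq d_i-d_{i+1}$, while for $i=n$ stability of $Q^A$ forces $d_1>d_n$, so $m_n=d_1-d_n+t>0$. Since $\sum_im_i=nt=\deg D$, I can partition $D=D_1+\cdots+D_n$ into effective pieces with $\deg D_i=m_i$. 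Taking $\phi_i$ to cut out $D_i$ (so each $\phi_i\neq0$) and recovering the nodes by the recursion $U_{i+1}=U_i\otimes\calO_X(D_i)\otimes L^{-1}$, a short bookkeeping check shows $\deg U_i=d_i$, that cyclicity $U_{n+1}=U_1$ holds automatically because $\sum_iD_i$ represents the class of $L^{\otimes n}$, and that matching $\bigotimes_iU_i=P$ amounts to extracting an $n$-th root of a fixed line bundle; such roots exist and their $n^{2g}$ choices are precisely the sheets of the covering in Lemma \ref{1...1}. By construction $\phi_1\cdots\phi_n=s$ up to sign.

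The step I expect to be the genuine obstacle is checking that the representations just built are stable, so that they define honest points of $\calM_{X,L}(Q,P)$ rather than merely points of the ambient representation space. With all $\phi_i$ nonzero, no proper coordinate sub-sum $\bigoplus_{i\in S}U_i$ is $\Phi$-invariant, because invariance forces $S$ to be closed under $i\mapsto i+1$ around the entire cycle; the difficulty is that an arbitrary $\Phi$-invariant subbundle need not be of this coordinate form, and whether a destabilizing one exists is controlled by how $\lambda^n-s$ factors over the function field of $X$. The clean way to dispose of this is the spectral correspondence: a cyclic $(E,\Phi)$ with characteristic polynomial $\lambda^n-s$ is the pushforward of a rank-one torsion-free sheaf on the spectral curve $\{\lambda^n=s\}$, which is integral for generic $s$ and then makes $(E,\Phi)$ automatically stable, while the standing coprimality hypothesis on $(n,d)$ rules out strictly semistable objects and so secures a stable representative for the remaining special $s$. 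Pinning this down uniformly in $s$ is the crux of the argument.

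Finally, the $\mbox{PGL}(n,\CC)$ statement follows formally once the $\mbox{SL}(n,\CC)$ case is in hand. Tensoring $E$ by a torsion line bundle $M\in\Gamma$ replaces $\Phi$ by $\Phi\otimes1_M$, which has the same characteristic polynomial, so $h$ is $\Gamma$-invariant and descends to $\calM^\Gamma_{X,L}(Q,P)=\calM_{X,L}(Q,P)/\Gamma$ with unchanged image. Surjectivity in the $\mbox{SL}(n,\CC)$ case therefore transfers verbatim to the quotient.
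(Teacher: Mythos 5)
Your proposal follows the same route as the paper's own proof, which is only three sentences long: compute $\mbox{char}_\lambda\Phi=\pm\lambda^n\pm\phi_1\cdots\phi_n$, conclude that cyclic representations land in the summand $H^0(X,L^{\otimes n})\subset\calB_n$, and assert that ``all such determinants can be obtained.'' Your first paragraph (vanishing of the intermediate coefficients via the cycle structure) and your second paragraph (realizing a given $s$ by distributing $\mbox{div}(s)$ into effective pieces of degrees $d_{i+1}-d_i+t$, reconstructing the $U_i$ by the evident recursion, and using divisibility of the Jacobian to match the fixed determinant, with the $n^{2g}$ roots recovering the sheets of Lemma~\ref{1...1}) are exactly the substance behind those assertions; the same divisor bookkeeping drives the paper's proof of Theorem~\ref{cyclic1...1}. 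The $\mbox{PGL}(n,\CC)$ descent via $\Gamma$-invariance of $h$ likewise matches the paper's identification of $\calM^\Gamma_{X,L}(Q,P)$ with $\calM_{X,L}(Q,P)/\Gamma$. So on everything the paper actually argues, you agree with it, and in more detail.

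The one step that remains open in your write-up is the one you flag yourself: stability of the constructed representation when $\lambda^n-s$ is reducible. Your instinct is correct that this is the only delicate point --- $\Phi$-invariant subbundles of $\bigoplus_i U_i$ need not be coordinate subbundles, and non-coordinate ones exist precisely when the spectral curve fails to be integral --- and integrality does dispose of generic $s$. But the sentence asserting that coprimality of $(n,d)$ ``secures a stable representative for the remaining special $s$'' does not do the job: coprimality promotes semistable objects to stable ones, it does not produce a semistable cyclic representative with prescribed reducible characteristic polynomial. Two honest ways to finish: (i) argue directly that the distribution $D=D_1+\cdots+D_n$ can be chosen so that the eigen-subbundles attached to each factor of $\lambda^n-s$ have slope below $\mu_{tot}$ (in rank two, for instance, concentrating $D_1$ on as few points of $\mbox{div}(s)$ as possible forces every eigen-line-subbundle to have small degree); or (ii) observe that the cyclic locus of fixed type sits inside the fixed locus of the finite-order automorphism $(E,\Phi)\mapsto(E,\zeta\Phi)$, $\zeta$ a primitive $n$-th root of unity, as a union of components, hence is closed in $\calM_{X,L}(n,P)$, so by properness of $h$ its image is closed and, containing the dense set of $s$ with integral spectral curve, is all of $H^0(X,L^{\otimes n})$. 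To be fair, the paper's proof is silent on this point altogether, so your proposal is more complete than the proof it is being measured against; but as written your treatment of the non-generic fibres is a sketch with an incorrect justification, not a proof.
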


\begin{proof}
By definition, $h((E,\Phi)) = \mbox{char}_\lambda(\Phi)=\pm\lambda^r\pm\phi_1\dots\phi_n$. That is, the Hitchin map sends any cyclic quiver representation to the determinant of $\Phi$.  Moreover, all such determinants can be obtained.

\end{proof}

It is difficult to give a global geometric description of either of our moduli spaces, but we can exploit the Hitchin map to describe their structure in each fibre.

\begin{theorem}\label{cyclic1...1}
Given a Riemann surface $X$ of genus $g$, a holomorphic line bundle $L$ of degree $t$, and a $(1,\dots,1)$ cyclic quiver $Q$, we have the following description of the $\emph{SL}(n,\CC)$ and $\emph{PGL}(n,\CC)$ moduli spaces, parametrized by $\gamma\in H^0(X,L^{\otimes n})\subset\calB_n$:
\begin{equation}\begin{split}\nonumber
&\calM_{X,L}(Q,P)\Big|_{h^{-1}(\gamma)}
\\
&\qquad \cong \left\{(U_1,\dots,U_{n};[\phi_1],\dots,[\phi_{n-1}])\in\calM_{X,L}(Q^A,P): (\phi_1\dots\phi_{n-1})\subseteq(\gamma)\right\}
\end{split}\end{equation}
and
\begin{equation}\begin{split}\nonumber
&\calM^\Gamma_{X,L}(Q,P)\Big|_{h^{-1}(\gamma)}
\\
&\qquad \cong \left\{(U_1,\dots,U_{n};[\phi_1],\dots,[\phi_{n-1}])\in\calM_{X,L}^\Gamma(Q^A,P): (\phi_1\dots\phi_{n-1})\subseteq(\gamma)\right\}
\end{split}\end{equation}
where $Q^A$ is the associated $A$-type quiver which admits stable representations and $(\phi_1\dots\phi_{n-1})$ and $(\gamma)$ are the divisors defined by the holomorphic sections $\phi_1\dots\phi_{n-1}$ and $\gamma$ respectively.
\end{theorem}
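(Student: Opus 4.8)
The plan is to realise the stated fibre as the image of the map that \emph{forgets} the wrap-around morphism $\phi_n$, and to recover $\phi_n$ from the Hitchin datum $\gamma$ together with the $A$-type datum. Throughout I would use Proposition \ref{cyclicstab} together with the re-indexing made after it: in any stable representation the maps $\phi_1,\dots,\phi_{n-1}$ are nonzero, while $\phi_n$ is the single map permitted to vanish. By Lemma \ref{hitchincyclic}, $h$ records (up to sign) the product $\phi_1\cdots\phi_n\in H^0(X,L^{\otimes n})$, so a representation lies in $h^{-1}(\gamma)$ precisely when $\phi_1\cdots\phi_n=\pm\gamma$. First I would set up the forgetful map: the automorphism action \eqref{cyclicaction} scales $(\phi_1,\dots,\phi_{n-1})$ by independent factors $(\lambda_1,\dots,\lambda_{n-1})$ and scales $\phi_n$ by $(\lambda_1\cdots\lambda_{n-1})^{-1}$, so the product $\phi_1\cdots\phi_n$ is invariant, consistent with fixing $\gamma$. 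Deleting $\phi_n$ therefore descends to a morphism $\bar F\colon \calM_{X,L}(Q,P)|_{h^{-1}(\gamma)}\to\calM_{X,L}(Q^A,P)$, because the induced action on $(\phi_1,\dots,\phi_{n-1})$ is exactly the one used in Lemma \ref{1...1} to build the $A$-type moduli space, with each $[\phi_i]$ individually projectivised.

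The heart of the argument is the reconstruction of $\phi_n$. Given an $A$-type point and a choice of representatives $\phi_1,\dots,\phi_{n-1}$, the Hitchin constraint $\phi_1\cdots\phi_n=\pm\gamma$ has the unique meromorphic solution $\phi_n=\pm\gamma/(\phi_1\cdots\phi_{n-1})$, and this is a \emph{holomorphic} section of $U_n^*U_1L$ exactly when the divisor inequality $(\phi_1\cdots\phi_{n-1})\subseteq(\gamma)$ holds. A degree count confirms the bookkeeping: $\deg(\gamma)-\deg(\phi_1\cdots\phi_{n-1})=nt-((d_n-d_1)+(n-1)t)=d_1-d_n+t=\deg(U_n^*U_1L)$, so the residual divisor $(\gamma)-(\phi_1\cdots\phi_{n-1})$ does represent the correct line bundle. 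Rescaling the $\phi_i$ by the automorphism group rescales $\phi_1\cdots\phi_{n-1}$, and hence $\phi_n$ compensatingly, so the recovered cyclic representation is well defined in the quotient and independent of the chosen representatives. This shows $\bar F$ is injective (since $\phi_n$ is pinned down with no residual freedom) and that its image is precisely the locus $\{(\phi_1\cdots\phi_{n-1})\subseteq(\gamma)\}$, matching the right-hand side.

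It remains to see that no representation in sight is destabilised, so that $\bar F$ genuinely lands in, and surjects onto, the stable $A$-type locus. Here I would emphasise that stability is automatic. As in the proof of Proposition \ref{cyclicstab} the relevant invariant subbundles are sums of consecutive nodes; since $Q^A$ is by hypothesis the $A$-type quiver admitting stable representations, and every point of $\calM_{X,L}(Q^A,P)$ has $\phi_1,\dots,\phi_{n-1}$ nonzero, the degrees already satisfy $\mu(U_j\oplus\cdots\oplus U_n)<\mu_{tot}$ for every proper tail. These slopes depend only on the fixed labelling, so every cyclic and every $A$-type representation over every $\gamma$ obeys the stability inequalities, whether or not $\phi_n$ vanishes; thus stability imposes no constraint beyond the divisor condition. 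Consequently $\bar F$ is a bijection onto the asserted set, and since both it and its inverse (division by the fixed section $\phi_1\cdots\phi_{n-1}$) are algebraic, it is an isomorphism of varieties.

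For the $\mbox{PGL}(n,\CC)$ statement I would pass to $\Gamma=\mbox{Jac}^0(X)[n]$-quotients on both sides: tensoring the $U_i$ by a torsion line bundle leaves each bundle $U_i^*U_{i+1}L$, and each section $\phi_i$, hence each divisor $(\phi_i)$, unchanged, so the divisor condition is manifestly $\Gamma$-invariant and $\bar F$ descends to the quotients. The main obstacle is the bookkeeping of this reconstruction step, namely verifying that the leftover scaling freedom in the automorphism group exactly matches the scale of $\phi_n$, so that the Hitchin constraint determines $\phi_n$ uniquely and $\bar F$ is a genuine bijection rather than merely a surjection; the observation that stability is forced by the labelling is what makes the rest routine. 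Finally, for $\gamma=0$ one reads the divisor condition as vacuous: the fibre is then all of $\calM_{X,L}(Q^A,P)$, realised by $\phi_n=0$, which is the intersection of the cyclic locus with the nilpotent cone.
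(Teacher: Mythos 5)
Your proof is correct and follows essentially the same route as the paper's: omit the wrap-around map $\phi_n$, identify the remaining data $(U_1,\dots,U_n;[\phi_1],\dots,[\phi_{n-1}])$ with a point of $\calM_{X,L}(Q^A,P)$ via Lemma \ref{1...1}, and observe that a holomorphic $\phi_n$ satisfying $\phi_1\cdots\phi_n=\pm\gamma$ exists precisely when $(\phi_1\cdots\phi_{n-1})\subseteq(\gamma)$, with the $\mbox{PGL}(n,\CC)$ statement obtained by passing to the $\Gamma=\mbox{Jac}^0(X)[n]$ quotient. Your added details (the degree count, the injectivity of the forgetful map, the stability discussion, and the $\gamma=0$ case) are correct elaborations of points the paper treats implicitly or in the remark following the theorem.
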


\begin{proof}
Let us begin by only omitting $[\phi_n]$ and considering only the list$$(U_1,\dots,U_{n-1};[\phi_1],\dots,[\phi_{n-1}]).$$ Then, Lemma \ref{1...1} tells us that the moduli of such data is the $n^{2g}$-sheeted cover $\tensor[^\sim]{\left(\prod_{i=1}^{n-1}\mbox{Sym}^{d_{i+1}-d_i+t}(X)\right)}{}$, but we must also account for the fixed determinant $\phi_1\dots\phi_n=\gamma$.  Now, not all points in this cover allow for a corresponding $\phi_n\in H^0(X,U_n^*U_1L)$ to be chosen so that this condition is satisfied.  We must require that the corresponding divisors satisfy $ (\phi_1\dots\phi_{n-1})\subseteq(\gamma)$, which tells us that $(\phi_{n-1}\dots\phi_1)^{-1}\gamma$ is well-defined and holomorphic and that there is, in fact, a suitable $\phi_n$ (technically, a suitable projective class $[\phi_n]$).  We now have the above set-theoretic description of $\calM_{X,L}(Q,P)\big|_{h^{-1}(\gamma)}$, and the action of $\text{Jac}^0(X)[n]$ gives us $\calM_{X,L}^\Gamma(Q,P)\big|_{h^{-1}(\gamma)}$, just as in Lemma \ref{1...1}.  We note that these descriptions are well-defined since all $\phi_i$ in the projective class $[\phi_i]$ define the same divisor.

\end{proof}

At $\gamma =0\in H^0(X,L^{\otimes n})$ this agrees exactly with Lemma \ref{1...1}, since any divisor $(\phi_1,\dots\phi_{n-1})$ lies inside the ``divisor'' determined by $\gamma=0$.

 %%%%%%%%%%%%%%%%%%%%%%%%%%%%%%%%%%%%%%%%%%%%%%%%%%%%%%%%%%%%%%%%%%
\section{Type $(1,\dots,1)$ cyclic quivers on $\PP^1$}\label{1...1p1}
 %%%%%%%%%%%%%%%%%%%%%%%%%%%%%%%%%%%%%%%%%%%%%%%%%%%%%%%%%%%%%%%%%%

Working in $g=0$, investigating the moduli space of representations of a $(1,\dots,1)$ quiver $Q$ is simplified by the fact that $\mbox{Jac}(\PP^1)\cong\ZZ$.   In this context, the moduli are now concentrated in the maps $\phi_i$.  Using Proposition \ref{cyclicstab}, we can write
\begin{equation}\begin{split}\nonumber
\mbox{Rep}(Q) &\cong \prod_{i=1}^{n-1}\Big(H^0(\PP^1,\calO(d_{i+1}-d_i+t))\setminus\{0\}\Big)\times H^0(\PP^1,\calO(d_1-d_n+t))
\\
&\cong \prod_{i=1}^{n-1}\Big(\CC^{d_{i+1}-d_i+t+1}\setminus\{0\}\Big)\times \CC^{d_1-d_n+t+1}
\end{split}\end{equation}

as well as

\begin{equation}\nonumber
\calM_{\PP^1,\calO(t)}(Q)\cong \frac{ \prod_{i=1}^{n-1}\Big(\CC^{d_{i+1}-d_i+t+1}\setminus\{0\}\Big)\times \CC^{d_1-d_n+t+1} }{(\CC^\times)^{n-1}},
\end{equation}
where the action of $(\CC^\times)^{n-1}$ is given by Equation (\ref{cyclicaction}).  This is an interesting quotient which is reminiscent of weighted projective space.
\begin{definition}
Let $\mathbf{a}=(a_0,\dots,a_n)$, $a_i\in\NN$ and define the action of $\CC_{\mathbf{a}}^*$ on $\CC^{n+1}\setminus\{0\}$ as the following action of $\CC^\times$: $$\lambda\cdot(x_0,\dots,x_n) = (\lambda^{a_0}x_0,\dots\lambda^{a_n}x_n).$$  Then \emph{$\mathbf{a}$-weighted complex projective space} is defined as $$\PP(a_0,\dots,a_n)=\frac{\CC^{n+1}\setminus\{0\}}{\CC_{\mathbf{a}}^*}.$$ 
\end{definition}
For example, if we consider representations of the quiver
 \begin{equation}\nonumber
\begin{tikzpicture}
\node (a) at (0,0){$\bullet_{1,d_1}$};
\node (b) at (2,0){$\bullet_{1,d_2}$};
\node(c) at (-1,0){$Q=$};

\draw[->] (a) to [bend right](b);
\draw[->] (b) to[bend right] (a);
\end{tikzpicture}
\end{equation}
then the moduli space is
\begin{equation}\nonumber
\calM_{\PP^1,\calO(t)}(Q)\cong \frac{ (\CC^{d_{2}-d_1+t+1}\setminus\{0\})\times \CC^{d_2-d_1+t+1} }{\CC^\times},
\end{equation}
which is a singular analogue of weighted projective space in which we allow for negative weights (the action looks like $\lambda\cdot(\phi_1,\phi_2) = (\lambda\phi_1,\lambda^{-1}\phi_2)$).   Higher rank examples can be thought of as products of these spaces, which are somehow intertwined at the part which is acted on by negative weight. 

\begin{theorem}\label{thmcyclicp1}
The moduli space of representations of a $(1,\dots,1)$ cyclic quiver $Q$ in the category $\text{Bun}(\PP^1,\calO(t))$ is both
\begin{itemize}
\item an $\eta(Q)=\binom{nt}{d_2-d_1+t,\dots,d_n-d_{n-1}+t, d_1-d_n+t}$-sheeted covering of $H^0(\PP^1,\calO(nt))\setminus\{0\}$ which branches over points with roots of multiplicity greater than one and whose sheets intersect over $0\in H^0(\PP^1,\calO(nt))$ as $\prod_{i=1}^{n-1}\PP^{d_{i+1}-d_i+t}$; and
\item the total space of $\calO_{\prod_{i=1}^{n-1}\PP^{d_{i+1}-d_i+t}}(-1,\dots,-1)^{\oplus d_1-d_n+t+1}$, where $$\calO_{\prod_{i=1}^{n-1}\PP^{d_{i+1}-d_i+t}}(-1,\dots,-1) = \bigotimes_{i=1}^{n-1}p^*_i\calO_{\PP^{d_{i+1}-d_i+t}}(-1)$$and $p_i$ is the natural projection onto the $i$-th factor.
\end{itemize}

\end{theorem}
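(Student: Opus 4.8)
The plan is to read off both descriptions directly from the quotient presentation of $\calM_{\PP^1,\calO(t)}(Q)$ obtained above, namely
\[
\calM_{\PP^1,\calO(t)}(Q)\cong\frac{\prod_{i=1}^{n-1}\bigl(\CC^{m_i+1}\setminus\{0\}\bigr)\times\CC^{m_n+1}}{(\CC^\times)^{n-1}},
\]
where I abbreviate $m_i=d_{i+1}-d_i+t$ for $1\le i\le n-1$ and $m_n=d_1-d_n+t$, so that $\sum_{i=1}^n m_i=nt$, and the $(\CC^\times)^{n-1}$-action is the one of Equation (\ref{cyclicaction}). The second (vector bundle) description comes from recognizing this quotient as an associated bundle, while the first (covering) description comes from composing with the Hitchin map of Lemma \ref{hitchincyclic} and translating factorizations of sections into decompositions of divisors on $\PP^1$.

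For the vector bundle description, I would first note that the action of $(\CC^\times)^{n-1}$ splits according to the two blocks of the product. On $\prod_{i=1}^{n-1}(\CC^{m_i+1}\setminus\{0\})$ the factor $\lambda_i$ scales only the $i$-th coordinate, so this is the product of the tautological principal $\CC^\times$-bundles and the quotient is the base $\prod_{i=1}^{n-1}\PP^{m_i}$; call the resulting principal $(\CC^\times)^{n-1}$-bundle $P$. On the remaining factor $\CC^{m_n+1}$, Equation (\ref{cyclicaction}) shows that $(\CC^\times)^{n-1}$ acts through $m_n+1$ copies of the single character $\chi(\lambda_1,\dots,\lambda_{n-1})=\lambda_1^{-1}\cdots\lambda_{n-1}^{-1}$. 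Hence the whole quotient is the associated vector bundle $P\times_{(\CC^\times)^{n-1}}\CC^{m_n+1}$ over $\prod_{i=1}^{n-1}\PP^{m_i}$, with projection $[(\phi_1,\dots,\phi_n)]\mapsto([\phi_1],\dots,[\phi_{n-1}])$. Since weight $-1$ in the coordinate $\lambda_i$ corresponds to the tautological line bundle $p_i^*\calO_{\PP^{m_i}}(-1)$, the line bundle associated to $\chi$ is $\bigotimes_{i=1}^{n-1}p_i^*\calO_{\PP^{m_i}}(-1)=\calO_{\prod\PP^{m_i}}(-1,\dots,-1)$, and the associated vector bundle is its $(m_n+1)$-fold direct sum, with $m_n+1=d_1-d_n+t+1$. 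This is exactly the second bullet.

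For the covering description I would use the Hitchin map $h$, which by Lemma \ref{hitchincyclic} sends $[(\phi_i)]$ to $\pm\phi_1\cdots\phi_n\in H^0(\PP^1,\calO(nt))$ and is surjective. Over a section $\gamma\ne0$ all the $\phi_i$ must be nonzero, so a point of $h^{-1}(\gamma)$ is a factorization $\phi_1\cdots\phi_n=\gamma$ modulo $(\CC^\times)^{n-1}$. Translating to divisors, each $\phi_i$ has a zero divisor $D_i$ of degree $m_i$ with $\sum_i D_i=(\gamma)$, and $\phi_i$ is determined by $D_i$ up to a scalar; the subtlety is that the scalars $(c_1,\dots,c_n)$ are constrained by $\prod_i c_i=1$ once $\gamma$ is fixed, and I would check that the $(\CC^\times)^{n-1}$ of Equation (\ref{cyclicaction}), being the kernel of the product character on $(\CC^\times)^n$, acts simply transitively on this set of scalar choices. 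Thus the points of $h^{-1}(\gamma)$ correspond bijectively to ordered decompositions $(\gamma)=D_1+\dots+D_n$ with $\deg D_i=m_i$. When $\gamma$ has $nt$ distinct roots this is the number of ways to distribute $nt$ labelled points into bins of sizes $m_1,\dots,m_n$, namely $\binom{nt}{m_1,\dots,m_n}=\eta(Q)$, so $h$ is a covering of degree $\eta(Q)$ over the locus of reduced $\gamma$; the count drops precisely when roots collide, giving branching over the discriminant. Finally, over $\gamma=0$ one has $\phi_n=0$ and the remaining data $([\phi_1],\dots,[\phi_{n-1}])$ ranges over $\prod_{i=1}^{n-1}\PP^{m_i}$, which is simultaneously the zero section of the bundle above, explaining how the sheets come together and confirming the compatibility of the two descriptions.

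I expect the main obstacle to be the rigorous justification of the covering structure in the first bullet: establishing that the multiplication map is a finite, flat, degree-$\eta(Q)$ morphism that is \'etale away from the discriminant, and carefully tracking the one-dimensional scalar constraint so that the torus quotient yields exactly the multinomial count rather than a positive-dimensional fibre. The associated bundle description is comparatively formal once the splitting of the action is observed. A secondary point worth verifying is the clean matching of the two pictures along $\gamma=0$, i.e. that the fibre of $h$ over the origin is literally the zero section $\prod_{i=1}^{n-1}\PP^{m_i}$ of the vector bundle.
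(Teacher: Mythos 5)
Your proof is correct, and for the first bullet it is essentially the paper's own argument: over $\gamma\neq 0$ the residual $(\CC^\times)^{n-1}$ acts only by scaling, so points of $h^{-1}(\gamma)$ correspond to distributions of the zeroes of $\gamma$ among the $\phi_i$, counted by $\eta(Q)$ for reduced $\gamma$, with the count dropping when roots collide, and with $h^{-1}(0)$ (where $\phi_n$ is forced to vanish) equal to $\prod_{i=1}^{n-1}\PP^{d_{i+1}-d_i+t}$. Your torsor observation --- that the kernel of the product character on $(\CC^\times)^n$ acts simply transitively on the scalar tuples compatible with a fixed divisor decomposition, so each decomposition yields exactly one point rather than a positive-dimensional family --- is a precision the paper leaves implicit, and it is exactly the right way to make the multinomial count airtight. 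Where you take a genuinely different (and cleaner) route is the second bullet: the paper proves the bundle structure by reduction in stages, quotienting by one $\CC^\times$ at a time, appealing to an external rank-one computation for the first step and then pinning down each successive bundle via $\Pic(\PP^a\times\PP^b)\cong\ZZ\times\ZZ$ together with compatibility with direct sums; you instead recognize the whole quotient in one step as the associated bundle $P\times_{(\CC^\times)^{n-1}}\CC^{d_1-d_n+t+1}$, where $P=\prod_{i=1}^{n-1}\bigl(\CC^{d_{i+1}-d_i+t+1}\setminus\{0\}\bigr)$ is the evident principal torus bundle over $\prod_{i=1}^{n-1}\PP^{d_{i+1}-d_i+t}$ and the torus acts on the fibre through the single character $\lambda_1^{-1}\cdots\lambda_{n-1}^{-1}$, whose associated line bundle is $\bigotimes_{i=1}^{n-1}p_i^*\calO_{\PP^{d_{i+1}-d_i+t}}(-1)$. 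This buys a self-contained argument with no induction, no external citation, and no Picard-lattice step; the paper's staged reduction, in exchange, makes the iterative structure explicit and ties to constructions in its references. One last remark: your closing concern about proving the multiplication map finite, flat, and \'etale off the discriminant is not a gap relative to the paper --- its covering claim is established only fibre-wise and set-theoretically, exactly as you do, so your argument is at the same level of rigour.
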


The first result also follows from Theorem \ref{cyclic1...1} since over $\PP^1$ the bundles $U_i$ are fixed, $\mbox{Sym}^d(\PP^1)\cong \PP^d$, and $\gamma$ and the $\phi_i$ can be thought of as polynomials, meaning that the divisor condition corresponds to the below discussion of the distribution of zeroes.

\begin{proof}
 To view the moduli space as a covering, choose a generic $\gamma \in H^0(\PP^1,\calO(nt))$ and consider the restriction $\calM_{\PP^1,\calO(t)}(Q)\big|_{h^{-1}(\gamma)}$. Fixing the determinant of $\Phi$ amounts to fixing the zeroes of the polynomial $\phi_1\dots\phi_n$. Recalling the action of $(\CC^\times)^{n-1}$ from Equation (\ref{cyclicaction}), we see that it only acts by scaling.  That is, the roots of $\phi_i$ are fixed for all $i$.  Thus, different distributions of the zeroes of $\gamma$ into the $\phi_i$ lead to legitimately different points in the moduli space.  This tells us that $\calM_{\PP^1,\calO(t)}(Q)\big|_{h^{-1}(\gamma)}$ consists of finitely many points, the number of which is given by the multinomial coefficient 
\begin{equation}\nonumber
\eta(Q):=\binom{nt}{d_2-d_1+t,\dots,d_n-d_{n-1}+t, d_1-d_n+t}.
\end{equation}

This fails over points of $H^0(\PP^1,\calO(nt))$ which, interpreted as polynomials, have repeated zeroes.  So, we have that $\calM_{\PP^1,\calO(t)}(Q)$ is an $\eta(Q)$-sheeted covering of $H^0(\PP^1,\calO(nt))$ which degenerates over points which have zeroes with multiplicity greater than one.  However, this is not quite a full description; we have so far neglected to mention the fibre $h^{-1}(0)$. Here we must always have $\phi_n=0$ and so by Lemma \ref{1...1}, $$\calM_{\PP^1,\calO(t)}(Q)\big|_{h^{-1}(0)} \cong \calM_{\PP^1,\calO(t)}(Q^A)\cong \prod_{i=1}^{n-1}\PP^{d_{i+1}-d_i+t}.$$ 

 How this fits into the covering described above can be seen by looking at certain flows of the moduli space.  On one hand, every point in $\prod_{i=1}^{n-1}\PP^{d_{i+1}-d_i+t}$ is in the intersection with the covering.  Fix such a point $p$, which we know consists only of the information
$$ \left(\begin{matrix}
 0&\cdots&&0\\
\phi_1&\ddots&& \\
&\ddots&&\\
0&&\phi_{n-1}&0\\
\end{matrix}\right).$$
 Choose any map $c\phi_n:\calO(d_n)\to\calO(d_1)\otimes\calO(t)$ and we see that the ray given by 
$$ \left(\begin{matrix}
 0&\cdots&&c\phi_n\\
\phi_1&\ddots&& \\
&\ddots&&\\
0&&\phi_{n-1}&0\\
\end{matrix}\right)$$
goes to $p$ as $c\to 0$.  That is, the cover intersects $h^{-1}(0)$ as the above product of projective spaces.

On the other hand, this behaviour describes a projection 
\begin{equation}\begin{split}\nonumber
\pi:&\mbox{ }\calM_{\PP^1,\calO(t)}(Q)\longrightarrow\calM_{\PP^1,\calO(t)}(Q^A)
\\
&(\phi_1,\dots,\phi_{n-1},c\phi_n)\mapsto (\phi_1,\dots,\phi_{n-1},0)
\end{split}\end{equation}
whose fibres are $H^0(\PP^1,\calO(d_1-d_n+t))\cong \CC^{d_1-d_n+t+1}$. We can show that this a vector bundle by looking back at the action of $(\CC^\times)^{n-1}$ on $\text{Rep}(Q)$, given by $$(\lambda_1,\dots,\lambda_{n-1})\cdot (\phi_1,\phi_2,\dots\phi_n) =  (\lambda_1\phi_1,\lambda_2\phi_2,\dots,(\lambda_1^{-1}\dots\lambda_{n-1}^{-1})\phi_n).$$
We perform reduction in stages.  That is, we first consider acting only by $\lambda_1\in\CC^\times$ on $(\phi_1,\phi_n)\in\CC^{d_2-d_1+t+1}\setminus \{0\}\times\CC^{d_1-d_n+t+1}$.  The quotient of this action is the vector bundle $\calO_{\PP^{d_2-d_1+t}}(-1)^{\oplus  d_1-d_n+t+1}$ (cf. \cite{RosTho11,Ray14}).  We note that this action commutes with direct sums: a simliar quotient of $\CC^{d_2-d_1+t+1}\setminus \{0\}\times\CC$ yields $\calO_{\PP^{d_2-d_1+t}}(-1)$.

Next we incorporate the data of $\phi_2$ and so our object of interest is $$\calO_{\PP^{d_2-d_1+t}}(-1)^{\oplus  d_1-d_n+t+1}\times\CC^{d_3-d_2+t+1}$$ being acted on by the other $\CC^\times$.  The action on the fibres of the bundle and on $\CC^{d_3-d_2+t+1}$ can be described as above, producing $\calO_{\PP^{d_3-d_2+t}}(-1)^{\oplus  d_1-d_n+t+1}$, while the Cartesian product between the base $\PP^{d_2-d_1+t}$ and the resulting projectivization of $\CC^{d_3-d_2+t+1}$ is left untwisted.  The result is a rank $d_1-d_n+t+1$ vector bundle on $\PP^{d_2-d_1+t}\times\PP^{d_3-d_2+t}$, which has degree $-1$ coming from each action.  An alternative way to describe the result of these two steps is to consider $\PP^{d_2-d_1+t}\times\PP^{d_3-d_2+t}$ with the projections $p_i$ to the two summands, and then take the pullbacks $p_1^*\calO(-1)$ and $p_2^*\calO(-1)$ of the respective tautological line bundles.  The line bundle$$\calO_{\PP^{d_2-d_1+t}\times\PP^{d_3-d_2+t+1}}(-1,-1):=p_1^*\calO(-1)\otimes p_2^*\calO(-1)$$is said to have a \emph{bi-degree}, coming from each factor.  Because$$\mbox{Pic}(\PP^{d_2-d_1+t}\times\PP^{d_3-d_2+t+1})=\ZZ\times\ZZ$$and because of the commutativity with direct sums observed above, we must have that our original double quotient is isomorphic as a vector bundle to $$\calO_{\PP^{d_2-d_1+t}\times\PP^{d_3-d_2+t+1}}(-1,-1)^{\oplus  d_1-d_n+t+1}.$$

Iterating this process for the remaining data $(\phi_3,\dots,\phi_{n-1})$ yields the desired result.

\end{proof}

The moduli space $\calM_{\PP^1,\calO(t)}(Q)$ from the point of view of the covering is pictured in Figure ~\ref{fig:covering}.
{\small{ \begin{figure}[h!]\centering
 \begin{tikzpicture}
 \draw (0,0) -- (1,1);
 \draw (1,1) -- (10,1);
 \draw (0,0) -- (9,0);
 \draw (9,0) -- (10,1);
 
   \fill[white] (5,3.5) ellipse (1.5cm and 2.75cm);
 \draw (5,3.5) ellipse (1.5cm and 2.75cm);

 \node (c) at (5,0.5) {$\bullet$};

 \node (c) at (6,0.4) {{\small{$\text{char}_\lambda\Phi=\vec{0}$}}};
 
 \node (d) at (9,0)[anchor=west]{$H^0(\PP^1,\calO(nt))$};
 
 \draw[|->] (10.25, 4.5) -- (10.25,0.5) node[pos=0.5, right]{$h$};
 
  \node (e) at (6.75,6.45) {{\small{$\mathcal{M}_{\mathbb{P}^1,\mathcal{O}(t)}(Q^A)\subset h^{-1}(0)$}}};

   \fill[white]  (5.5,4)--(6,5)--(9,5)--(8.5,4)--cycle;  
     \fill[white] (5.5,2)--(6,3)--(9,3)--(8.5,2)--cycle; 
      \fill[white]  (4.5,3)--(0.5,3) -- (1,4)--(4,4)--cycle; 
    \fill[white] (0.5,1.5) -- (1,2.5)--(4,2.5)--(4.5,1.5)--cycle;
 
 \draw [dashed] (5.5,4) -- (6,5); 
  \draw [dashed] (5.5,2) -- (6,3);  
   \draw [dashed] (4,4)--(4.5,3);   
  \draw [dashed] (4,2.5)--(4.5,1.5);

   \draw  (6,5)--(9,5)--(8.5,4)--(5.5,4);
  \draw (6,3)--(9,3)--(8.5,2)--(5.5,2); 
   \draw  (4.5,3)--(1.2,3) -- (1.7,4)--(4,4);
  \draw  (4.5,1.5)--(1.2,1.5) -- (1.7,2.5)--(4,2.5);
  
  \node (f) at (0,2.75){$\eta(Q)\text{ sheets} \Bigg\{$};
 \end{tikzpicture}
 \caption{$\calM_{\PP^1,\calO(t)}(Q)$ as a cover of $H^0(\PP^1,\calO(nt))$.}
\label{fig:covering}
 \end{figure}}}

 Recall that in the moduli space of ordinary Higgs bundles $\calM_{X,\omega_X}(r,d)$ on a curve $X$ of genus $g\geq 2$, one of the components of the $\mathbb{C}^\times$-fixed-point set is the $A$-type quiver variety $\calM_{X,\omega_X}(\bullet_{r,d})$.  This is nothing more than the moduli space of stable bundles, which has a distinguished bundle over it (namely the cotangent bundle) which is also embedded in $\calM_{X,\omega_X}(r,d)$.  Over $\PP^1$, the moduli space of stable bundles of rank at least $2$ is empty, and so this feature is absent. The bundle we construct above is an analogue of this embedded bundle for $\mathbb P^1$ and is necessarily supported on a different (nonempty) component of the fixed-point locus.  We expect this fibration structure to persist for cyclic quiver variety on curves of any genus, although with a less explicit description.

\begin{example}\label{examplecyclic}
Let $X=\PP^1$, $L=\calO(4)$, and
 \begin{equation}\nonumber
\begin{tikzpicture}
\node (a) at (0,0){$\bullet_{1,0}$};
\node (b) at (2,0){$\bullet_{1,-1}$};
\node(c) at (-1,0){$Q=$};

\draw[->] (a) to [bend right](b);
\draw[->] (b) to[bend right] (a);
\end{tikzpicture}
\end{equation}
so a representation looks like
\begin{equation}\nonumber
\begin{tikzpicture}
\node (a) at (0,0){$\calO$};
\node (b) at (2,0){$\calO(-1)$};

\node (e) at (1,0.7) {$\phi_2$};
\node (e) at (1,-0.7) {$\phi_1$};
\draw[->] (a) to [bend right](b);
\draw[->] (b) to[bend right] (a);
\end{tikzpicture}
\end{equation}
with $\phi_1\in H^0(\PP^1,\calO\otimes\calO(-1)\otimes\calO(4))\setminus \{0\}\cong \CC^4\setminus \{0\}$ and $\phi_2\in H^0(\PP^1,\calO(1)\otimes\calO\otimes\calO(4))\cong \CC^6$. Fix a generic point $\gamma\in H^0(\PP^1,\calO(8))$, say $\gamma = c(z-z_1)\dots(z-z_8)$. There are $\binom{8}{3} = 56$ ways to distribute the roots $z_i$, and using the power of the automorphism group we can put the constant $c$ with $\phi_2$:
\begin{equation}\nonumber
\Phi =  \left(\begin{matrix}
 0&c\prod_{j\in J}(z-z_j)\\
\prod_{i\in I}(z-z_i)&0 \\
\end{matrix}\right)\,\,\mbox{where }I\cap J =\varnothing, |I|=3, |J|=5.
\end{equation}
This gives a $56$-fold ramified covering of $H^0(\PP^1,\calO(8))\setminus\{0\}$.  At $\gamma=0$, we must have $\phi_2=0$, and so $$\calM_{\PP^1,\calO(4)}(Q^A)\cong \PP^3.$$  We can reach any point in $\calM_{\PP^1,\calO(4)}(Q^A)$ by choosing a suitable point in the cover and then taking $c\to 0$.  From a different point of view, the moduli space is the total space of the vector bundle $\calO_{\PP^3}(-1)^{\oplus 6}$.

\end{example}

 %%%%%%%%%%%%%%%%%%%%%%%%%%%%%%%%%%%%%%%%%%%%%%%%%%%%%%%%%%%%%%%%%%
\section{Type $(k,1)$ cyclic quivers on $\PP^1$}\label{k1p1}
 %%%%%%%%%%%%%%%%%%%%%%%%%%%%%%%%%%%%%%%%%%%%%%%%%%%%%%%%%%%%%%%%%%

We would like to expand to quivers which have some nodes labelled with higher ranks and will start by having a look at cyclic quivers of the form
 \begin{equation}\nonumber
\begin{tikzpicture}
\node (a) at (0,0){$\bullet_{k,d_1}$};
\node (b) at (2,0){$\bullet_{1,d_2}$};
\node(c) at (-1,0){$Q=$};

\draw[->] (a) to [bend right](b);
\draw[->] (b) to[bend right] (a);
\end{tikzpicture}
\end{equation}
where the underlying curve is $\PP^1$.  We will then restrict focus a little further.  Recall the splitting type of a bundle $U$ of rank $k$ over $\PP^1$
\begin{equation}\nonumber
\mathbf{a} = (a_1,\dots,a_m;s_1,\dots,s_m)
\end{equation}
which defines that $U$ splits as
 \begin{equation}U\cong \mathcal{O}(a_1)^{\plus s_1}\oplus\cdots\oplus\mathcal{O}(a_m)^{\plus s_m}.
 \nonumber\end{equation}
 
 We content ourselves with the case that $s_i=1$ for all $i=1,\dots,m$, meaning that $m=k$ and the line bundles are of mutually distinct degrees.  We further ask that\footnote{This also covers the case $\mu_{tot}>a_i$  for all $i$, simply by considering the dual quiver representation, which will have $\mu_{tot}<a_i$ for all $i$.} $\mu_{tot}<a_i$ for all $i$.  The reasons for this are discussed in Remark \ref{why}.  In effect we are imposing that an analogue of Proposition \ref{cyclicstab} holds. For the remainder of this section, we assume that $\mathbf{a}$ has these properties.

With these restrictions in place, we are considering moduli of representations which look like

\begin{equation}\label{k1cyclic}
\begin{tikzpicture}
%Places and names the nodes
    \node (a) at (0,0){$\mathcal{O}(a_1)$};
 \node[below=0.7cm of a] (c){$\vdots$};
 \node[below=1cm of c] (d){$\mathcal{O}(a_k)$};
 \node[right=3cm of c] (g){$\mathcal{O}(d_2)$};
%Draws and labels the arrows
\draw[->] (a) to[bend left=7] node[above] {$\phi_1$} (g);
\draw[->] (g) to[bend left=7] node[below] {$\phi_2$} (a);
\draw[->] (d) to[bend left=7] node[above] {$\phi_{2k-1}$} (g);
\draw[->] (g) to[bend left=7] node[below] {$\phi_{2k}$} (d);
    \end{tikzpicture}
\end{equation}
along with automorphisms $\psi_{ij}:\calO(a_i)\to\calO(a_j)$ for all $i>j$.  Here, stability implies that none of $\phi_1,\phi_3,\dots,\phi_{2k-1}$ can be zero, but any of $\phi_2,\phi_4,\dots,\phi_{2k}$ can be.  This imposes the further condition $-a_i+d_2+t\geq 0$ for all $i=1,\dots,k$.  We denote the moduli space of such representations by $\calM_{\PP^1,\calO(t)}(Q;\mathbf{a})$.  Let this notation further imply that no other splitting types are ``allowed'' to occur within this moduli space (see Remark \ref{splittingtypes}). 

\begin{lemm}
\begin{equation}\nonumber
h:\calM_{\PP^1,\calO(t)}(Q;\mathbf{a}) \twoheadrightarrow H^0(\PP^1,\calO(t)^{\otimes 2})
\end{equation}
\end{lemm}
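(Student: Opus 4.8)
The plan is to first pin down the image of $h$ via the characteristic polynomial of $\Phi$, and then to show that every value in $H^0(\PP^1,\calO(t)^{\otimes2})$ is attained by an explicit stable representation. Ordering the summands of $E=\calO(a_1)\oplus\cdots\oplus\calO(a_k)\oplus\calO(d_2)$ with the rank-$k$ node first, the Higgs field takes the block form
\begin{equation}\nonumber
\Phi=\left(\begin{matrix}0 & B\\ A & 0\end{matrix}\right),\qquad A=(\phi_1,\phi_3,\dots,\phi_{2k-1}),\quad B=(\phi_2,\phi_4,\dots,\phi_{2k})^{\mathrm{T}},
\end{equation}
where $A$ assembles the forward maps $\phi_{2i-1}\in H^0(\PP^1,\calO(-a_i+d_2+t))$ and $B$ the backward maps $\phi_{2i}\in H^0(\PP^1,\calO(a_i-d_2+t))$. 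Computing $\text{char}_\lambda\Phi=\det(\lambda\,\Id-\Phi)$ by the Schur complement of the upper-left block $\lambda\,\Id_k$ gives $\lambda^{k+1}-\lambda^{k-1}AB$, so that the only characteristic coefficient which can be nonzero is (up to sign) $AB=\sum_{i=1}^k\phi_{2i-1}\phi_{2i}\in H^0(\PP^1,\calO(t)^{\otimes2})$. Thus $h$ factors through the summand $H^0(\PP^1,\calO(t)^{\otimes2})\subseteq\calB_{k+1}$, and the statement reduces to realizing every $\gamma\in H^0(\PP^1,\calO(t)^{\otimes2})$ as such a sum by a stable representation.

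For surjectivity I would argue by a direct factorization. Because $\mu_{tot}<a_i$ forces $a_i>d_2$, the backward degree $a_1-d_2+t$ is positive while the forward degree $-a_1+d_2+t$ is nonnegative (this is precisely the condition imposed in the setup) and strictly less than $2t$. Given a nonzero $\gamma$, view it as a binary form of degree $2t$ and split its linear factors into a product $\gamma=\phi_1\phi_2$ with $\deg\phi_1=-a_1+d_2+t$ and $\deg\phi_2=a_1-d_2+t$; set the remaining backward maps $\phi_4,\dots,\phi_{2k}$ to zero and choose the remaining forward maps $\phi_3,\dots,\phi_{2k-1}$ to be nonzero and with no common zero with $\phi_1$. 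Then $\sum_i\phi_{2i-1}\phi_{2i}=\phi_1\phi_2=\gamma$, every forward map is nonzero, and the forward maps have no common zero; the case $\gamma=0$ is covered by taking all backward maps to vanish. Equivalently, one may fix generic coprime forward maps and invoke that two coprime binary forms of degrees $p,p'$ span every form of degree $\ge p+p'-1$, and here $p+p'-1<2t$, to solve the linear system $\sum_i\phi_{2i-1}\phi_{2i}=\gamma$ for the backward maps.

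It remains to check that the representation just produced is stable, and this is the step I expect to require the most care. By the running hypotheses on $\mathbf a$ (the analogue of Proposition \ref{cyclicstab} discussed in Remark \ref{why}) the $\Phi$-invariant saturated subbundles split into two types. First, any invariant subbundle $F$ containing the node-$2$ factor $\calO(d_2)$ satisfies $\mu(F)<\mu_{tot}$ unconditionally: writing $\deg F=d_2+\deg\bar F$ with $\bar F\subseteq\bigoplus_i\calO(a_i)$ of rank $r<k$ and bounding $\deg\bar F$ by the sum of the $r$ largest $a_i$, the inequality $\mu(F)<\mu_{tot}$ follows from $a_i>\mu_{tot}$ for every $i$. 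Second, any invariant subbundle disjoint from the node-$2$ factor lies in $\ker A$, the kernel of the combined forward map; since we arranged the forward maps to have no common zero, $A$ is fibrewise surjective and $\ker A$ carries the least possible degree. The heart of the matter is therefore to verify that the maximal destabilizing slope $\mu_{\max}(\ker A)<\mu_{tot}$, which is exactly what the conditions on $\mathbf a$ and on the degree $t$ of $L$ guarantee, and what is required for the moduli space to be nonempty in the first place. Granting this, the constructed representation is stable and maps to $\gamma$, so $h$ is surjective onto $H^0(\PP^1,\calO(t)^{\otimes2})$.
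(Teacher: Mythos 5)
Your overall route is the same as the paper's: compute $\text{char}_\lambda\Phi$ to identify the image of $h$ with the summand $H^0(\PP^1,\calO(t)^{\otimes 2})$ of the Hitchin base, then argue that every $\gamma$ there is attained. Your Schur-complement computation $\text{char}_\lambda\Phi=\lambda^{k+1}-\lambda^{k-1}AB$ is correct, and in fact places the nontrivial coefficient at $\lambda^{r-2}$, where degree reasons (the coefficient of $\lambda^{r-j}$ lives in $H^0(X,L^{\otimes j})$) say it must sit, rather than at $\lambda^{r-1}$ as in the paper's display. The paper disposes of surjectivity in one sentence, by appeal to Lemma \ref{hitchincyclic} (``all such determinants can be obtained''), so everything you write past the factorization $\gamma=\phi_1\phi_2$ is detail the paper leaves implicit --- and that is where the gap lies.

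The gap is the stability step, precisely the one you flag and then wave away. The claim that $\mu_{\max}(\ker A)<\mu_{tot}$ ``is exactly what the conditions on $\mathbf{a}$ and on the degree $t$ of $L$ guarantee'' is not true: those conditions neither imply it nor even imply that the moduli space is nonempty. Take $k=3$, $\mathbf{a}=(2,1,0;1,1,1)$, $d_2=-4$, $t=6$. Then $\mu_{tot}=-\tfrac{1}{4}<a_i$ and $-a_i+d_2+t\geq 0$ for all $i$, yet for \emph{any} choice of forward maps the saturated kernel of $A\colon\calO(2)\oplus\calO(1)\oplus\calO(0)\to\calO(2)$ has rank $2$ and degree at least $\sum_i a_i-(d_2+t)=1$, hence contains a line subbundle of degree at least $1>\mu_{tot}$; since every subsheaf of $\ker A$ is automatically $\Phi$-invariant, no representation with this data is stable. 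So stability of your constructed representation requires a hypothesis beyond the paper's running assumptions (nonemptiness of $\calM_{\PP^1,\calO(t)}(Q;\mathbf{a})$, which the lemma tacitly needs anyway), and even granting nonemptiness your primary construction is unverified: $\phi_1$ is pinned down by the factorization of $\gamma$ (e.g.\ $\gamma=z^{2t}$ forces $\phi_1=z^{-a_1+d_2+t}$), and ``nonzero with no common zero with $\phi_1$'' for the remaining forward maps controls only the degree of $\ker A$, not its splitting type, which is what $\mu_{\max}$ depends on. A second, smaller issue: your two-case split of invariant subbundles is not exhaustive as stated, since a subbundle can meet $\calO(d_2)$ nontrivially without containing it.

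Both issues are repairable, essentially by running your parenthetical alternative in the right order. The non-graded subbundles can be discarded because the maximal destabilizing subsheaf of $(E,\Phi)$ is preserved by $\mathrm{diag}(1_{U_1},-1_{U_2})$, which conjugates $\Phi$ to $-\Phi$, hence is graded; combined with your (correct) Type 1 estimate, this shows $(E,\Phi)$ is stable if and only if $\mu_{\max}(\ker A)<\mu_{tot}$, i.e.\ stability depends on the forward maps alone and is Zariski-open in $A$. Granting nonemptiness, a generic stable $A$ with $\phi_1,\phi_3$ coprime therefore exists, and for that \emph{fixed} $A$ your binary-form spanning fact solves $\sum_i\phi_{2i-1}\phi_{2i}=\gamma$ linearly in the backward maps for every $\gamma$. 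That version closes the argument; the factor-$\gamma$-through-$\phi_1$ version does not.
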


\begin{proof}
This follows from the same argument as Lemma \ref{hitchincyclic}.  In this case the calculation gives
\begin{equation}\nonumber
h((E,\Phi)) = \pm\lambda^r\mp\lambda^{r-1}(\phi_1\phi_2+\phi_3\phi_4+\dots+\phi_{2k-1}\phi_{2k}).
\end{equation}
\end{proof}

Our strategy to understand $\calM_{\PP^1,\calO(t)}(Q;\mathbf{a})$ is to view a representation such as (\ref{k1cyclic}) as $k$ separate $(1,1)$ quiver representations by first ``using up'' the freedom of the automorphisms $\psi_{ij}$.  On the projective line, these maps have interpretations as polynomials on $\PP^1$, a fact which we have already made use of.  This allows us to perform a Euclidean reduction such as $\phi_j\mapsto \phi_j+\phi_i\psi_{ji}$.  This process is described in detail in \cite{RaySun18}. To state our result, we also need to use the following notation:

Let $Q$ be the type $(1,1)$ cyclic quiver \begin{equation}\nonumber
\begin{tikzpicture}
\node (a) at (0,0){$\bullet_{1,d_1}$};
\node (b) at (2,0){$\bullet_{1,d_2}$};
\node(c) at (-1,0){$Q=$};

\draw[->] (a) to [bend right](b);
\draw[->] (b) to[bend right] (a);
\end{tikzpicture}
\end{equation}
oriented with $d_1>d_2$ so that in a representation, the map $\phi_1:\calO (d_1)\to\calO (d_2)$ can not be zero by stability.  Denote by $\calM_{\PP^1,\calO(t)}(Q^{-b})$ the moduli space of representations of $Q$ in $\text{Bun}(\PP^1,\calO(t))$ where $\phi_1$ has had its amount of freedom (in terms of complex dimensions) reduced by $b$.  

\begin{example}
For example, let $L=\calO(4)$ and 
 \begin{equation}\nonumber
\begin{tikzpicture}
\node (a) at (0,0){$\bullet_{1,0}$};
\node (b) at (2,0){$\bullet_{1,-1}$};
\node(c) at (-1,0){$Q=$};

\draw[->] (a) to [bend right](b);
\draw[->] (b) to[bend right] (a);
\end{tikzpicture}
\end{equation}
as in Example \ref{examplecyclic}.  Consider $\calM_{\PP^1,\calO(t)}(Q^{-2})$, saying that now $\phi_1\in \CC^2\setminus \{0\}$ and $\phi_2\in \CC^6$.  Now for generic $\gamma\in H^0(\PP^1,\calO(8))$ reduced to $H^0(\PP^1,\calO(6))$, we have only to distribute $6$ zeroes into the $\phi_1$ and $\phi_2$. We have $\binom{6}{1} =6$ ways to do so.  At $\gamma=0$, we have $\calM_{\PP^1,\calO(t)}(Q^{-2})\big|_{h{-1}(0)} \cong \PP^1$.
\end{example}

Note that even though we may not have an interpretation of the information of $\calM_{\PP^1,\calO(t)}(Q^{-2})$ as an unadjusted $(1,1)$ cyclic quiver variety (although we do in some cases), its structure is easily calculated in a familiar way.  Now we can write the moduli space of representations of a $(k,1)$ cyclic quiver in terms of these adjusted moduli spaces of representations of $(1,1)$ cyclic quivers.

\begin{theorem}\label{k1cyclicp1}
Let $Q$ be a type $(k,1)$ cyclic quiver, $\mathbf{a}=(a_1,\dots,a_k;1,\dots,1)$ be a splitting type, and $Q_i$ be the quivers
 \begin{equation}\nonumber
\begin{tikzpicture}
\node (a) at (0,0){$\bullet_{1,a_i}$};
\node (b) at (2,0){$\bullet_{1,d_2}$};

\draw[->] (a) to [bend right](b);
\draw[->] (b) to[bend right] (a);
\end{tikzpicture}
\end{equation} 
Then
\begin{equation}\nonumber
\mathcal{M}_{\PP^1,\calO(t)}(Q;\mathbf{a})\cong\calM_{\PP^1,\calO(t)}(Q_1)\times\prod_{i=2}^k\calM_{\PP^1,\calO(t)}\Big(Q^{-\sum_{j=1}^{i-1}(a_j-a_i+1)}_i\Big).
\end{equation} 
\end{theorem}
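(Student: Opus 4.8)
The plan is to exhaust the automorphisms of the rank-$k$ node so as to split the representation into $k$ independent rank-one problems. Ordering the splitting type as $a_1>a_2>\cdots>a_k$ and writing $E=U\oplus\calO(d_2)$ with $U=\bigoplus_{i=1}^k\calO(a_i)$, the Higgs field of a representation of the form (\ref{k1cyclic}) becomes the block matrix $\Phi=\left(\begin{smallmatrix}0&\psi\\\phi&0\end{smallmatrix}\right)$, where the row $\phi=(\phi_1,\phi_3,\dots,\phi_{2k-1})$ collects the outgoing maps $\calO(a_i)\to\calO(d_2)\otimes L$ and the column $\psi=(\phi_2,\phi_4,\dots,\phi_{2k})$ collects the incoming maps $\calO(d_2)\to\calO(a_i)\otimes L$. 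Since the $a_i$ are distinct, $\mbox{Aut}(U)=T\ltimes N$, where $T\cong(\CC^\times)^k$ is the diagonal torus and $N$ is the unipotent group of strictly triangular maps $\psi_{ij}\in\Hom(\calO(a_i),\calO(a_j))=H^0(\calO(a_j-a_i))$ for $i>j$. A diagonal $g\in\mbox{Aut}(U)$ together with the scalar $\nu$ on $\calO(d_2)$ acts by $\phi\mapsto\nu\,\phi g^{-1}$ and $\psi\mapsto\nu^{-1}g\psi$.

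First I would use $N$ to normalize the outgoing maps. As $\deg\phi_{2i-1}=d_2-a_i+t$ is strictly increasing in $i$ and every $\phi_{2i-1}$ is nonzero by stability (the analogue of Proposition \ref{cyclicstab} imposed on $\mathbf{a}$), I would reduce $\phi_{2i-1}$ modulo the lower-degree maps $\phi_1,\dots,\phi_{2i-3}$ by iterated polynomial division, which is exactly the substitution $\phi_j\mapsto\phi_j+\phi_i\psi_{ji}$ of \cite{RaySun18}. The multiplier available when reducing $\phi_{2i-1}$ against $\phi_{2j-1}$ lies in $H^0(\calO(a_j-a_i))$ and so carries $a_j-a_i+1$ parameters; performing these reductions in stages for $i=2,\dots,k$ therefore consumes $c_i:=\sum_{j<i}(a_j-a_i+1)$ dimensions of $N$ at the $i$-th stage and removes $c_i$ dimensions of freedom from $\phi_{2i-1}$, while $\phi_1$ (the $i=1$ factor) is left untouched. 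This exhausts $N$ exactly, since $\dim N=\sum_{i=2}^k c_i$.

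Next I would isolate the surviving data. The element of $N$ realizing the normalization is uniquely determined by the outgoing maps (freeness on the stable locus, discussed below), so $N$ admits a global slice and the quotient of the outgoing configuration is the product over $i$ of the reduced spaces $\phi_{2i-1}\in\CC^{(d_2-a_i+t+1)-c_i}\setminus\{0\}$ (nonzero by stability, as otherwise $\phi_{2i-1}$ would lie in the span of the lower maps and could be made to vanish). Because this same $g$ acts invertibly on the incoming data through $\psi\mapsto g\psi$, after normalization each $\phi_{2i}$ still ranges over all of $H^0(\calO(a_i-d_2+t))$, so the incoming maps keep their full freedom, matching the fact that $Q_i^{-c_i}$ reduces only the outgoing map. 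Finally, the leftover torus $T\times\mbox{Aut}(\calO(d_2))$ modulo the overall scalar is isomorphic to $(\CC^\times)^k$ via $\rho_i=\nu\mu_i^{-1}$, and it acts on each pair $(\phi_{2i-1},\phi_{2i})$ with weights $(+1,-1)$ and on distinct pairs independently, which is precisely the $\CC^\times$-action defining each $(1,1)$ cyclic quiver variety. Combining these three facts factors the quotient and yields
\begin{equation}\nonumber
\calM_{\PP^1,\calO(t)}(Q;\mathbf{a})\cong\calM_{\PP^1,\calO(t)}(Q_1)\times\prod_{i=2}^k\calM_{\PP^1,\calO(t)}\Big(Q_i^{-c_i}\Big).
\end{equation}

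The main obstacle is the normalization step: I must show the iterated reduction uses $N$ freely and strips off exactly $c_i$ dimensions at each stage. Concretely this asks that the map $\bigoplus_{j<i}H^0(\calO(a_j-a_i))\to H^0(\calO(d_2-a_i+t))$, $(s_j)\mapsto\sum_{j<i}\phi_{2j-1}s_j$, be injective with image of dimension $c_i$, which rests on the nonvanishing and the absence of common zeros among $\phi_1,\dots,\phi_{2i-3}$; this is where the splitting-type and slope hypotheses on $\mathbf{a}$ (together with the exclusion of other splitting types, Remark \ref{splittingtypes}) must be used. A secondary subtlety is that the complement into which $\phi_{2i-1}$ is normalized a priori depends on the lower maps, so the outgoing quotient is built as an iterated fibration; I would verify that each stage is Zariski-locally trivial — each being a bundle over a projective space, as in Theorem \ref{thmcyclicp1} — so that the fibration collapses to an honest direct product rather than a twisted one.
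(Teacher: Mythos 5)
Your proposal is correct and takes essentially the same approach as the paper's proof: both use the hypotheses on $\mathbf{a}$ to guarantee the outgoing maps $\phi_1,\phi_3,\dots,\phi_{2k-1}$ are nonzero, spend the automorphisms $\psi_{ij}$ on a staged Euclidean reduction of the outgoing maps exactly as in the $A$-type $(k,1)$ case of \cite{RaySun18} (reducing $\phi_{2i-1}$ by $\sum_{j<i}(a_j-a_i+1)$ while leaving $\phi_1$ and all incoming maps at full freedom), and then split the residual quotient into $k$ independent $(1,1)$ cyclic quiver problems. Your explicit treatment of the torus/unipotent decomposition of $\mbox{Aut}(U)$ and of the freeness of the reduction simply fills in details that the paper delegates to \cite{RaySun18}.
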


\begin{proof}
Recall the visualization of a representation from Equation \ref{k1cyclic}. Due to our assumptions on $\mathbf{a}$, the maps $\phi_1,\phi_3,\dots,\phi_{2k-1}$ cannot be zero, but any of $\phi_2,\phi_4,\dots,\phi_{2k}$ can be.  The automorphisms $\psi_{ij}$ are not able reduce the amount of freedom of any of the maps $\phi_2,\phi_4,\dots,\phi_{2k}$.  This means that the reductions which take place are exactly the ones that would take place in the $A$-type $(k,1)$ case with $\mathbf{a}$ as the splitting type.  In particular, the map $\phi_{2i-1}$ has its moduli reduced by $\sum_{j=1}^{m-1}(a_j-a_i+1)$.  Note that $\phi_1$ is not reduced at all.  Now we have split our moduli problem into $k$ parts, each of the form $\calM_{\PP^1,\calO(t)}\Big(Q_i^{-\sum_{j=1}^{i-1}(a_j-a_i+1)}\Big)$, and we have our result.

\end{proof}

\begin{remark}\label{splittingtypes}
Strictly speaking, the moduli space $\mathcal{M}_{\PP^1,\calO(t)}(Q)$ is stratified by splitting types of the bundle $U_1$.  If one fixes a splitting type $\mathbf{a}$, there are other ``less generic'' splittings and the moduli spaces corresponding to these can be viewed as lying inside the moduli space corresponding to $\mathbf{a}$.  Along this locus, the automorphisms $\psi_{ij}$ have more freedom and the moduli space is blown down in some way.  Since our approach only allows us to realize the moduli space for specific splittings types $\mathbf{a}$, we will not explore this stratification.  The space which we construct above could be alternatively described as the projective completion of the regular part of the moduli space corresponding to splitting type $\mathbf{a}$.  This phenomenon is investigated for some special cases of $A$-type quivers in \cite{RaySun18}.
\end{remark}

\begin{cor}
The moduli space restricted to a fibre, $\mathcal{M}_{\PP^1,\calO(t)}(Q;\mathbf{a})\big|_{h^{-1}(\gamma)}$, is a $$\binom{(r-1)t-\sum_{j=1}^{k-1}(a_j-a_k+1)}{d_2-a_k+t-\sum_{j=1}^{k-1}(a_j-a_k+1)}$$ -to-one covering of $$\calM_{\PP^1,\calO(t)}(Q_1)\times\prod_{i=2}^{k-1}\calM_{\PP^1,\calO(t)}\Big(Q^{-\sum_{j=1}^{i-1}(a_j-a_i+1)}_i\Big),$$ except over points $(\phi_1,\phi_2,\dots,\phi_{2k-2})$ such that $\phi_1\phi_2+\phi_3\phi_4+\dots+\phi_{2k-3}\phi_{2k-2}=-\gamma$, where the cover intersects the fibre as $\PP^{d_2-a_k+t-\sum_{j=1}^{k-1}(a_j-a_k+1)}$.
\end{cor}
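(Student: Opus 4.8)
The plan is to run the fibre-wise root-counting argument from the proof of Theorem \ref{thmcyclicp1} on top of the product decomposition furnished by Theorem \ref{k1cyclicp1}. Write $b_i:=\sum_{j=1}^{i-1}(a_j-a_i+1)$, so that Theorem \ref{k1cyclicp1} presents a representation in $\calM_{\PP^1,\calO(t)}(Q;\mathbf{a})$ by a reduced model $(\phi_1,\phi_2,\dots,\phi_{2k-1},\phi_{2k})$ in which $\phi_1$ is unreduced and each $\phi_{2i-1}$ with $i\geq2$ has had its freedom reduced by $b_i$ via Euclidean reduction using the automorphisms $\psi_{ij}$. First I would invoke the characteristic-polynomial computation established above for $(k,1)$ quivers: the only nontrivial Hitchin coordinate is $\phi_1\phi_2+\phi_3\phi_4+\dots+\phi_{2k-1}\phi_{2k}$, so restricting to $h^{-1}(\gamma)$ cuts out, inside $\calM_{\PP^1,\calO(t)}(Q_1)\times\prod_{i=2}^k\calM_{\PP^1,\calO(t)}(Q_i^{-b_i})$, exactly the single equation $\sum_{i=1}^k\phi_{2i-1}\phi_{2i}=-\gamma$.

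Next I would project onto the first $k-1$ factors by forgetting the pair $(\phi_{2k-1},\phi_{2k})$. Over a point of the base the fibre equation becomes $\phi_{2k-1}\phi_{2k}=\delta$, where $\delta:=-\gamma-(\phi_1\phi_2+\dots+\phi_{2k-3}\phi_{2k-2})$. Since we are on $\PP^1$ every $\phi_i$ is a polynomial and, just as in Theorem \ref{thmcyclicp1}, the residual scaling action leaves the zeros of each factor rigid; hence recovering $(\phi_{2k-1},\phi_{2k})$ from $\delta$ is the combinatorial problem of distributing the zeros of $\delta$ between the factor $\phi_{2k-1}$, constrained to its reduced degree $d_2-a_k+t-b_k$, and the free factor $\phi_{2k}$. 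For a $\delta$ with distinct roots this has $\binom{(r-1)t-b_k}{\,d_2-a_k+t-b_k}$ solutions, which is the asserted generic covering degree, and the cover branches precisely where $\delta$ acquires a repeated root, in parallel with the branch locus of Theorem \ref{thmcyclicp1}.

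To treat the exceptional fibre I would isolate the locus $\phi_1\phi_2+\dots+\phi_{2k-3}\phi_{2k-2}=-\gamma$, i.e. $\delta=0$. There $\phi_{2k-1}\phi_{2k}=0$, and since stability forbids $\phi_{2k-1}=0$ we must have $\phi_{2k}=0$; the remaining freedom is the entire reduced space of $\phi_{2k-1}$ modulo scaling, namely $\PP^{\,d_2-a_k+t-b_k}$. One sees this concretely by the flow $c\phi_{2k}\to0$, exactly as the sheets of the covering in Theorem \ref{thmcyclicp1} were shown to meet the fibre over $0$; this exhibits how the $\binom{(r-1)t-b_k}{\,d_2-a_k+t-b_k}$ sheets collapse to a single copy of that projective space over the exceptional locus.

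The main obstacle is the degree bookkeeping that simultaneously fixes the top entry of the binomial and controls the projection onto the stated base. The delicate feature is that the automorphisms performing the reduction act not only on the forward maps $\phi_{2i-1}$ but also on the backward maps $\phi_{2i}$, so one must first check that the invariant $\sum_i\phi_{2i-1}\phi_{2i}$ is genuinely preserved, and then compute the degree of $\delta$ after the reductions have cascaded through each product $\phi_{2i-1}\phi_{2i}$. Pinning this degree down to $(r-1)t-b_k$ -- equivalently, verifying that the reduced factor $\phi_{2k-1}$ carries degree $d_2-a_k+t-b_k$ while the complementary factor absorbs exactly the rest -- is the one step that does not merely transcribe the type $(1,\dots,1)$ argument. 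Once this computation is in hand, the root-distribution count and the branch/exceptional analysis follow the template of Theorem \ref{thmcyclicp1} verbatim.
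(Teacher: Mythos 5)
Your proposal takes essentially the same route as the paper's proof: fix $\gamma$, let $(\phi_1,\dots,\phi_{2k-2})$ range over $\calM_{\PP^1,\calO(t)}(Q_1)\times\prod_{i=2}^{k-1}\calM_{\PP^1,\calO(t)}\big(Q_i^{-b_i}\big)$ (in your notation $b_i=\sum_{j=1}^{i-1}(a_j-a_i+1)$), and then recover the last pair $(\phi_{2k-1},\phi_{2k})$ by distributing the unfixed zeros of the residual $\delta=-\gamma-(\phi_1\phi_2+\dots+\phi_{2k-3}\phi_{2k-2})$; on the locus $\delta=0$ you note that stability forbids $\phi_{2k-1}=0$, hence $\phi_{2k}=0$, leaving the projective space of reduced classes $[\phi_{2k-1}]$. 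This is exactly the paper's argument, and your treatment of the exceptional fibre is, if anything, more explicit. One of the two worries you defer is also immediately dismissible: $\sum_i\phi_{2i-1}\phi_{2i}$ is (up to sign) a coefficient of $\mbox{char}_\lambda\Phi$, hence invariant under \emph{every} automorphism $\Phi\mapsto\Psi\Phi\Psi^{-1}$, in particular under the Euclidean reductions by the $\psi_{ij}$; no separate check is needed.

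The genuine gap is the step you explicitly postpone. You assert that distributing the zeros of $\delta$ yields $\binom{(r-1)t-b_k}{d_2-a_k+t-b_k}$ choices, but you never compute $\deg\delta$, and the computation does not return $(r-1)t-b_k$. Both $\gamma$ and each product $\phi_{2i-1}\phi_{2i}$ are sections of $L^{\otimes 2}=\calO(2t)$ --- this is precisely the content of the Hitchin-map lemma of Section \ref{k1p1}, whose target is $H^0(\PP^1,\calO(t)^{\otimes 2})$ \emph{independently of} $k$ (the exponent $\lambda^{r-1}$ written there is itself a slip for $\lambda^{r-2}$, but the target is correct). So $\delta$ has $2t$ zeros; the Euclidean reduction freezes $b_k$ of them, leaving $2t-b_k$ to distribute, and consistently $(d_2-a_k+t-b_k)+(a_k-d_2+t)=2t-b_k$. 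Hence the covering degree your own argument produces is $\binom{2t-b_k}{d_2-a_k+t-b_k}$. This equals the stated $\binom{(r-1)t-b_k}{d_2-a_k+t-b_k}$ only when $r=k+1=3$, i.e. $k=2$, which is the case of the paper's worked example; for $k>2$ the top entry printed in the Corollary is evidently a slip for $2t-\sum_{j=1}^{k-1}(a_j-a_k+1)$. In short: your plan to ``pin the degree down to $(r-1)t-b_k$'' cannot be carried out as stated; carrying out the degree count honestly completes your proof, but of the statement with $(r-1)t$ read as $2t$.
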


\begin{proof}
If we fix $\gamma = \phi_1\phi_2+\phi_3\phi_4+\dots+\phi_{2k-1}\phi_{2k}$, it is clear that we have the freedom to choose any $(\phi_1,\phi_2,\dots,\phi_{2k-2})$ which will then place restrictions on $\phi_{2k-1}$ and $\phi_{2k}$.  $(\phi_1,\phi_2)$ must be chosen before we can reduce the freedom of $\phi_3$, and so forth, which is why $(\phi_{2k-1},\phi_{2k})$ is the last to be chosen.  Now the problem amounts to distributing the zeroes of $\gamma-\phi_1\phi_2-\phi_3\phi_4-\dots-\phi_{2k-3}\phi_{2k-2}$ which are not already fixed into $\phi_{2k-1}$ and $\phi_{2k}$. In the case $\phi_1\phi_2+\phi_3\phi_4+\dots+\phi_{2k-3}\phi_{2k-2}=-\gamma$, we must have $\phi_{2k}=0$, and so the fibre is the projective space $\PP^{d_2-a_k+t-\sum_{j=1}^{k-1}(a_j-a_k+1)}$.

\end{proof}

 In contrast to the $(1,\dots,1)$ cyclic case, in the $(k,1)$ cyclic case there is nothing unusual happening to the moduli space at the nilpotent cone $h^{-1}(0)$.  Instead, there is an analogue of $\calM_{\PP^1,\calO(t)}(Q^A)$ living inside each fibre.

\begin{example}
Let $t=5$, $Q$ be the quiver
\begin{equation}\nonumber
\begin{tikzpicture}
\node (a) at (0,0){$\bullet_{2,1}$};
\node (b) at (2,0){$\bullet_{1,-2}$};

\draw[->] (a) to [bend right](b);
\draw[->] (b) to[bend right] (a);
\end{tikzpicture}
\end{equation}
and $\mathbf{a}$ be the splitting type $(1,0;1,1)$.  So a representation looks like
\begin{equation}\nonumber
\begin{tikzpicture}
%Places and names the nodes
    \node (a) at (0,0){$\mathcal{O}(1)$};
 \node[below=0.6cm of a] (c){};
 \node[below=0.9cm of c] (d){$\mathcal{O}$};
 \node[right=2.4cm of c] (g){$\mathcal{O}(-2)$};
%Draws and labels the arrows
\draw[->] (a) to[bend left=7] node[above] {$\phi_1$} (g);
\draw[->] (g) to[bend left=7] node[below] {$\phi_2$} (a);
\draw[->] (d) to[bend left=7] node[above] {$\phi_3$} (g);
\draw[->] (g) to[bend left=7] node[below] {$\phi_4$} (d);
\draw[->, dotted] (d) to node[left] {$\psi_{21}$} (a);
    \end{tikzpicture}
\end{equation}
where $\phi_1\in\CC^3\setminus\{0\}$, $\phi_2\in\CC^9$, $\phi_3\in\CC^4\setminus\{0\}$, and $\phi_1\in\CC^8$.  The automorphism $\psi_{21}\in\CC^2$ reduces $\phi_3$ to $\CC^2\setminus\{0\}$, and 
\begin{equation}\nonumber
\mathcal{M}_{\PP^1,\calO(5)}(Q;\mathbf{a})\cong\calM_{\PP^1,\calO(5)}(Q_1)\times\calM_{\PP^1,\calO(5)}(Q_2^{-2}).
\end{equation} 
Considering $\mathcal{M}_{\PP^1,\calO(5)}(Q;\mathbf{a})\big|_{h^{-1}(\gamma)}$, we fix $\gamma = c(z-z_1)\dots(z-z_{10}) = \phi_1\phi_2+\phi_3\phi_4$.  We can choose any $(\phi_1,\phi_2)\in\calM_{\PP^1,\calO(5)}(Q_1)$ and then consider $\gamma-\phi_1\phi_2=\phi_3\phi_4$.  We know that the map $\phi_3$ is reduced by $\phi_{21}$, so we ignore the top $2$ degrees of $\gamma-\phi_1\phi_2$.  The moduli problem then amounts to distributing $8$ zeroes, $1$ into $\phi_3$ and $7$ into $\phi_4$, resulting in an $8$-fold covering of $\calM_{\PP^1,\calO(5)}(Q_1)$, except over the points $\gamma=\phi_1\phi_2$, where we have $\PP^1$.

This same behaviour is displayed in the intersection with the nilpotent cone, although we can identify the locus $$\{(\phi_1,\phi_2)\in\calM_{\PP^1,\calO(5)}(Q_1) :\phi_1\phi_2=0\}\times\PP^1\cong \PP^2\times\PP^1$$ with $\calM_{\PP^1,\calO(t)}(\bullet_{2,1}\longrightarrow\bullet_{1,-2})$.
\end{example}

\begin{remark}\label{why}
The reason for the specific structure of splittings $\mathbf{a}$ we consider is that when the maps $\phi_i$ which are and are not allowed to be zero by stability are less rigidly structured, the actions of $\psi_{ij}$ become less clear.  Without being able to say exactly which maps the automorphisms reduce, our approach of considering $k$ different $(1,1)$ cyclic quiver varieties is less effective.   It is also this lack of a clear decomposition into products of varieties which we understand that prevents us from using this procedure to study splittings where some of the line bundles $\calO(a_i)$ have the same degree, as well as $(1,k,1)$ and general argyle quivers.  In these later cases, one must also contend with the fact that the terms of the fixed characteristic polynomial are, in general, no longer simply products of the maps $\phi_i$. 
\end{remark}

\bibliographystyle{acm} 
\bibliography{CyclicBibliography}

\begin{thebibliography}{10}

\bibitem{AGS:06}
{\sc {\'A}lvarez-C{\'o}nsul, L., Garc{\'i}a-Prada, O., and Schmitt, A. H.~W.}
\newblock On the geometry of moduli spaces of holomorphic chains over compact
  {R}iemann surfaces.
\newblock {\em IMRP Int. Math. Res. Pap.\/} (2006), Art. ID 73597, 82.

\bibitem{ABCGGO:18}
{\sc Aparicio-Arroyo, M., Bradlow, S., Collier, B., Garc\'{\i}a-Prada, O.,
  Gothen, P.~B., and Oliveira, A.}
\newblock Exotic components of {${\rm SO}(p,q)$} surface group representations,
  and their {H}iggs bundle avatars.
\newblock {\em C. R. Math. Acad. Sci. Paris 356}, 6 (2018), 666--673.

\bibitem{A:71}
{\sc Atiyah, M.~F.}
\newblock Riemann surfaces and spin structures.
\newblock {\em Ann. Sci. \'Ecole Norm. Sup. (4) 4\/} (1971), 47--62.

\bibitem{DB:15}
{\sc Baraglia, D.}
\newblock Cyclic {H}iggs bundles and the affine {T}oda equations.
\newblock {\em Geom. Dedicata 174\/} (2015), 25--42.

\bibitem{BodYok96}
{\sc Boden, H.~U., and Yokogawa, K.}
\newblock Moduli spaces of parabolic {H}iggs bundles and parabolic {$K(D)$}
  pairs over smooth curves. {I}.
\newblock {\em Internat. J. Math. 7}, 5 (1996), 573--598.

\bibitem{BGG:12}
{\sc Bradlow, S.~B., Garc\'{\i}a-Prada, O., and Gothen, P.~B.}
\newblock Deformations of maximal representations in {${\rm Sp}(4,\Bbb R)$}.
\newblock {\em Q. J. Math. 63}, 4 (2012), 795--843.

\bibitem{C:17}
{\sc {Collier}, B.}
\newblock {SO(n, n + 1)-Surface Group Representations and Higgs Bundles}.
\newblock {\em pre-print\/} (Oct. 2017), arXiv:1710.01287.

\bibitem{ColLi17}
{\sc Collier, B., and Li, Q.}
\newblock Asymptotics of {H}iggs bundles in the {H}itchin component.
\newblock {\em Adv. Math. 307\/} (2017), 488--558.

\bibitem{ColThoTou17}
{\sc {Collier}, B., {Tholozan}, N., and {Toulisse}, J.}
\newblock {The geometry of maximal representations of surface groups into
  SO(2,n)}.
\newblock {\em pre-print\/} (Feb. 2017), arXiv:1702.08799.

\bibitem{DaiLi17}
{\sc {Dai}, S., and {Li}, Q.}
\newblock {On cyclic Higgs bundles}.
\newblock {\em pre-print\/} (Oct. 2017), arXiv:1710.10725.

\bibitem{DL:19}
{\sc Dai, S., and Li, Q.}
\newblock Minimal surfaces for {H}itchin representations.
\newblock {\em J. Differential Geom. 112}, 1 (2019), 47--77.

\bibitem{DumNei19}
{\sc Dumas, D., and Neitzke, A.}
\newblock Asymptotics of {H}itchin's {M}etric on the {H}itchin {S}ection.
\newblock {\em Comm. Math. Phys. 367}, 1 (2019), 127--150.

\bibitem{Dum18}
{\sc Dumitrescu, O.}
\newblock A journey from the {H}itchin section to the oper moduli.
\newblock In {\em String-{M}ath 2016}, vol.~98 of {\em Proc. Sympos. Pure
  Math.} Amer. Math. Soc., Providence, RI, 2018, pp.~107--138.

\bibitem{DumFreKydMazMulNei16}
{\sc Dumitrescu, O., Fredrickson, L., Kydonakis, G., Mazzeo, R., Mulase, M.,
  and Neitzke, A.}
\newblock {Opers versus nonabelian Hodge}.
\newblock {\em pre-print\/} (2016), arXiv:1607.02172.

\bibitem{Fre18}
{\sc Fredrickson, L.}
\newblock {Exponential decay for the asymptotic geometry of the Hitchin
  metric}.
\newblock {\em pre-print\/} (Oct. 2018), arXiv:1810.01554.

\bibitem{GaiMooNei13}
{\sc Gaiotto, D., Moore, G.~W., and Neitzke, A.}
\newblock Wall-crossing, {H}itchin systems, and the {WKB} approximation.
\newblock {\em Adv. Math. 234\/} (2013), 239--403.

\bibitem{GHS:14}
{\sc Garc{\'i}a-Prada, O., Heinloth, J., and Schmitt, A.}
\newblock On the motives of moduli of chains and {H}iggs bundles.
\newblock {\em J. Eur. Math. Soc. (JEMS) 16}, 12 (2014), 2617--2668.

\bibitem{PBG:94}
{\sc Gothen, P.~B.}
\newblock The {B}etti numbers of the moduli space of stable rank {$3$} {H}iggs
  bundles on a {R}iemann surface.
\newblock {\em Internat. J. Math. 5}, 6 (1994), 861--875.

\bibitem{PBG:95}
{\sc Gothen, P.~B.}
\newblock {\em The {T}opology of {H}iggs {B}undle {M}oduli {S}paces}.
\newblock 1995.
\newblock Thesis (Ph.D.), University of Warwick.

\bibitem{NJH:86}
{\sc Hitchin, N.~J.}
\newblock The self-duality equations on a {R}iemann surface.
\newblock {\em Proc. London Math. Soc. (3) 55}, 1 (1987), 59--126.

\bibitem{KNPS:15}
{\sc Katzarkov, L., Noll, A., Pandit, P., and Simpson, C.}
\newblock Harmonic maps to buildings and singular perturbation theory.
\newblock {\em Comm. Math. Phys. 336}, 2 (2015), 853--903.

\bibitem{Lab06}
{\sc Labourie, F.}
\newblock Anosov flows, surface groups and curves in projective space.
\newblock {\em Invent. Math. 165}, 1 (2006), 51--114.

\bibitem{Lab17}
{\sc Labourie, F.}
\newblock Cyclic surfaces and {H}itchin components in rank 2.
\newblock {\em Ann. of Math. (2) 185}, 1 (2017), 1--58.

\bibitem{Lof07}
{\sc Loftin, J.}
\newblock Flat metrics, cubic differentials and limits of projective
  holonomies.
\newblock {\em Geom. Dedicata 128\/} (2007), 97--106.

\bibitem{MazSwoWeiWit14}
{\sc {Mazzeo}, R., {Swoboda}, J., {Wei\ss}, H., and {Witt}, F.}
\newblock {Limiting configurations for solutions of Hitchin's equation.}
\newblock In {\em {Actes de S\'eminaire de Th\'eorie Spectrale et
  G\'eom\'etrie. Ann\'ee 2012--2014}}. St. Martin d'H\`eres: Universit\'e de
  Grenoble I, Institut Fourier, 2014, pp.~91--116.

\bibitem{MazSwoWeiWit16}
{\sc Mazzeo, R., Swoboda, J., Weiss, H., and Witt, F.}
\newblock Ends of the moduli space of {H}iggs bundles.
\newblock {\em Duke Math. J. 165}, 12 (2016), 2227--2271.

\bibitem{Moc15}
{\sc Mochizuki, T.}
\newblock Asymptotic behaviour of certain families of harmonic bundles on
  {R}iemann surfaces.
\newblock {\em J. Topol. 9}, 4 (2016), 1021--1073.

\bibitem{SM:16}
{\sc {Mozgovoy}, S.}
\newblock {Higgs bundles over {$\mathbb{P}^1$} and quiver representations}.
\newblock {\em ArXiv e-print 1611.08515\/} (Nov. 2016).

\bibitem{SR:11}
{\sc Rayan, S.}
\newblock {\em Geometry of {C}o-{H}iggs {B}undles}.
\newblock 2011.
\newblock Thesis (Ph.D.), University of Oxford.

\bibitem{Ray14}
{\sc Rayan, S.}
\newblock Constructing co-{H}iggs bundles on {$\Bbb C\Bbb P^2$}.
\newblock {\em Q. J. Math. 65}, 4 (2014), 1437--1460.

\bibitem{RaySun18}
{\sc Rayan, S., and Sundbo, E.}
\newblock Twisted argyle quivers and {H}iggs bundles.
\newblock {\em Bull. Sci. Math. 146\/} (2018), 1--32.

\bibitem{RosTho11}
{\sc Ross, J., and Thomas, R.}
\newblock Weighted projective embeddings, stability of orbifolds, and constant
  scalar curvature {K}\"{a}hler metrics.
\newblock {\em J. Differential Geom. 88}, 1 (2011), 109--159.

\end{thebibliography}

\end{document}